\newtheorem{theorem}{Theorem}[section]
\newtheorem{lemma}[theorem]{Lemma}
\theoremstyle{definition}
\newtheorem{definition}[theorem]{Definition}
\newtheorem{ex}[theorem]{Example}
\newtheorem{remark}[theorem]{Remark}
\numberwithin{equation}{section}
\newskip\aline \newskip\halfaline
\def\skipaline{\vskip\aline}
\def\qedbox{$\rlap{$\sqcap$}\sqcup$}
\def\qed{\nobreak\hfill\penalty250 \hbox{}\nobreak\hfill\qedbox\skipaline}
\newcommand\bR{{\mathbb R}}
\newcommand\bZ{{\mathbb Z}}
\DeclareMathOperator{\supp}{{\rm supp}}
\DeclareMathOperator{\codim}{codim}
\newcommand{\bm}{\boldsymbol{m}}
\newcommand{\bn}{\boldsymbol{n}}
\newcommand{\bsC}{\boldsymbol{C}}
\newcommand{\bsI}{\boldsymbol{I}}
\newcommand{\bsN}{\boldsymbol{N}}
\newcommand{\bsR}{\boldsymbol{R}}
\newcommand{\bsS}{\boldsymbol{S}}
\newcommand{\bsT}{\boldsymbol{T}}
\newcommand{\bsU}{{\boldsymbol{U}}}
\newcommand{\bsV}{{\boldsymbol{V}}}
\newcommand{\bgamma}{\boldsymbol{\gamma}}
\newcommand{\bom}{\boldsymbol{\omega}}
\newcommand{\si}{{\sigma}}
\newcommand{\ve}{{\varepsilon}}
\newcommand{\vfi}{{\varphi}}
\newcommand{\eA}{\EuScript{A}}
\newcommand{\eB}{\EuScript{B}}
\newcommand{\eC}{\EuScript{C}}
\newcommand{\eF}{\EuScript{F}}
\newcommand{\h}{\EuScript H}
\newcommand{\eH}{\EuScript H}
\newcommand{\eI}{\EuScript{I}}
\newcommand{\eK}{\EuScript{K}}
\newcommand{\eL}{\EuScript{L}}
\newcommand{\eN}{\EuScript{N}}
\newcommand{\eP}{\EuScript{P}}
\newcommand{\eR}{\EuScript{R}}
\newcommand{\eS}{\EuScript{S}}
\newcommand{\eX}{\EuScript{X}}
\newcommand{\eZ}{\EuScript{Z}}
\newcommand{\ra}{\rightarrow}
\newcommand{\hra}{\hookrightarrow}
\newcommand{\lan}{\langle}
\newcommand{\ran}{\rangle}
\def\inpr{\mathbin{\hbox to 6pt{\vrule height0.4pt width5pt depth0pt \kern-.4pt \vrule height6pt width0.4pt depth0pt\hss}}}
\newcommand{\pa}{\partial}
\newcommand{\dxi}{{\dot{\xi}}}
\newcommand{\dx}{{\dot{x}}}
\newcommand{\ori}{\boldsymbol{or}}
\DeclareMathOperator{\cl}{\boldsymbol{cl}}
\DeclareMathOperator{\Cr}{\mathbf{Cr}}
\DeclareMathOperator{\Graff}{\mathbf{Graff}}
\DeclareMathOperator{\limi}{\underrightarrow{\mathrm{lim}}}
\newcommand{\dual}{{\spcheck{}}}
\newcommand{\chit}{\chi_{\mathrm{top}}}
\newcommand{\chio}{\chi_{\mathrm{o}}}
\begin{document}

\title{On the normal cycles of   subanalytic sets}

\date{Started March 4, 2010. Completed  on March 5, 2010.
Last modified on {\today}. }

\author{Liviu I. Nicolaescu}

\address{Department of Mathematics, University of Notre Dame, Notre Dame, IN 46556-4618.}
\email{nicolaescu.1@nd.edu}
\urladdr{\url{http://www.nd.edu/~lnicolae/}}

\begin{abstract}  We present a very short complete  proof of the existence of the normal cycle of a  subanalytic set. The approach is  Morse theoretic in flavor   and   relies heavily on    techniques   from  $o$-minimal topology.    \end{abstract}

\maketitle

\tableofcontents

\section{Introduction}
\setcounter{equation}{0}

The normal and conormal cycles of a reasonably well behaved subset $X$ of an oriented Euclidean space  $\bsV$ of dimension $n$ are  currents  that  encode rather subtle topological  and  geometric features of  the set.    The normal cycle $\bsN^X$ is a Legendrian  cycle  contained in the unit sphere bundle  $S(T\bsV)$ associated to the tangent bundle $T\bsV$, while  the conormal cycle is   a Lagrangian cycle $\bsS^X$  in the cotangent  bundle $T^*\bsV$.     The  two objects completely determine each other in a   canonical fashion.   Their precise  definitions are rather sophisticated in general, but they can be easily     described in many concrete examples.

For example, if  $X$ is a   submanifold of $\bsV$, then  $\bsN^X$  can be identified with the integration current defined by the total space of the unit sphere bundle associated to the normal bundle of the embedding $X\hra \bsV$, while $\bsS^X$ can be identified with the current of integration defined by the total space of the conormal bundle of the embedding $X\hra \bsV$.

The  normal   cycle  $\bsN^X$  is intimately related to H. Weyl's celebrated tube formula \cite{Weyl}.  More precisely,  there exist canonical forms (see \cite[\S 0.3]{Fu2})  $\eta_0,\dotsc, \eta_{n-1}\in \Omega^{n-1}\bigl(\, S(T\bsV)\,\bigr)$ such that,   for any compact submanifold $X\hra\bsV$, the integrals
\begin{equation}
\mu_k(X):=\int_{\bsN^X}\eta_k
\label{eq: n1}
\end{equation}
can be  expressed as integrals over $X$ of universal   polynomials in the curvature of the induced metric on $X$.     For example,  if $m=\dim X$, then $\mu_m(X)$ is the $m$-dimensional volume of $X$, and $\mu_{m-2}(X)$   coincides (up to a universal  multiplicative constant) with the integral over $X$ of the scalar curvature.  The quantity $\mu_0(X)$ is the Euler characteristic  of $X$  which, according to the Gauss-Bonnet theorem, can be expressed  as the integral of a universal polynomial  in the curvature of of $X$. The quantities  $\mu_k(X)$ are known  as \emph{curvature measures}.   They are the  key ingredients in the tube formula that  states that for any sufficiently small $r>0$ the volume of a tube of radius $r$ around a  compact submanifold $X\hra \bsV$ of dimension $m$ is (see \cite[\S 9.3.3.]{N0})
\begin{equation}
V_X(r)=\sum_{k=0}^m \mu_{m-k}(X) \bom_{n-m+k}r^{n-m+k},
\label{eq: tube}
\end{equation}
where $\bom_p$ denotes the volume of the unit $p$-dimensional ball. 

If $X$ happens to be a bounded domain in $\bsV$ with sufficiently regular boundary $\pa X$, then  we   have a unit outer normal vector field 
\begin{equation}
\bn:  \pa X\ra  S(\bsV):=\bigl\{ v\in\bsV:\;\;|v|=1\,\bigr\}
\label{eq: g1}
\end{equation}
and the   normal cycle  $\bsN^X$  the integration current defined by  the graph of the above map; see Example \ref{ex: reg}.    In this case the integrals $\int_{\bsN^X}\eta_k$  can be expressed  as integrals over $\pa X$ of universal polynomials in the     second fundamental form of  the hypersurface $\pa X$, and they are involved  in a  tube formula similar to (\ref{eq: tube}), \cite[\S 9.3.5]{N0}. If additionally $X$ happens to be convex, then  the curvature measures $\mu_k(X)$ coincide with     the  \emph{Quermassintegrale}  constructed  by  H. Minkowski,  \cite{KlR}.             

In the  groundbreaking work  \cite{Feder0}     H. Federer   has  explained how to  associate curvature measures to subsets  of $\bsV$  of positive reach.        This class of subsets contain as subclasses the smooth submanifolds  of $\bsV$, the  bounded   domains  with smooth boundary and the convex bodies in $\bsV$, and in these cases Federer's curvature measures specialize to the curvature measures described above.

As explained in \cite{KlR},  the \emph{Quermasseintegrale}  can  be extended in a canonical fashion to   finitely additive measures (valuations)  defined  on the collection of polyconvex subsets of $\bsV$, i.e.,    sets that are finite unions of  convex bodies.  In particular, the quantity $\mu_k(X)$ is well defined  for any compact $PL$ subset of $\bsV$.   For most  $PL$ sets the Gauss map (\ref{eq: g1}) is not defined and the above definition of $\bsN^X$ is meaningless. 

In the  mid 1980s, J. Cheeger, S. M\"{u}ller and R .Schr{a}der \cite{CMS} have associated to  an arbitrary  compact $PL$ subset $X\subset \bsV$ a   Legendrian cycle  $N^X$  contained in $S(T\bsV)$ such that 
\[
\mu_k(X)=\int_{\bsN^X} \eta_k
\]
Their   elementary construction  is very intuitive and  is based on elementary    Morse theory on $PL$-spaces.    Moreover, the correspondence $X\mapsto \bsN^X$ from the collection of  compact $PL$ subsets of $\bsV$ to the Abelian group of  Legendrian cycles in $S(T\bsV)$  is a finitely additive measure, i.e.,
\begin{equation}
\bsN_{X\cup Y}=\bsN^X+\bsN^Y-\bsN^{X\cap Y},
\label{eq: inc-exc}
\end{equation}
for any $PL$ sets $X,Y$.   The paper  \cite{CMS}  includes the first formal definition of the normal cycle. Roughly speaking, the normal cycle  $\bsN^X$  is  designed  to be an  ingenious catalogue  of the Morse theoretic behavior of the restrictions to $X$ of ``typical'' linear functions on $\bsV$.

A few  years after   \cite{CMS},   M. Kashiwara and P. Schapira \cite{KaSch} have shown how to associate  a   normal cycle to any bounded subanalytic  subset of $\bsV$.       Although the Morse theoretic point of view is still in the background, their  approach is sheaf theoretic  and geared towards topological applications.  Their proof is quite sophisticated as it relies on  highly nontrivial results about the derived categories of  sheaves.       J. Sch\"{u}rmann \cite{Schu} has  proposed a simpler sheaf theoretic  construction of the normal cycle, but this too requires  a good  familiarity    with  stratified spaces  the basic operations in the derived category of sheaves.

  Almost immediately following the work of Kashiwara and Schapira, J. Fu   \cite{Fu2}   gave another  construction  of the normal   cycle   of a subanalytic set using    methods of geometric measure theory.    His proof  is technically very demanding, and  the complete details are spread over several papers.
  
  Very recently, A. Berning \cite{Ber}  has  proposed  a very ingenious and elegant  elementary construction of the  normal cycle of a subanalytic set using  the recent advances in $o$-minimal topology and  basic facts about currents. Unfortunately there  is a  flaw in a key existence result, \cite[Lemma 6.4]{Ber}; see Remark \ref{rem: glitch}(a) for more detail;s. The present paper grew out of  our attempts to  fix that flaw.

 The  main goal  of this paper is to  describe  a very short  complete proof of the existence of the normal cycle  of a subanalytic set   by relying on techniques and ideas from $o$-minimal topology.    We rely on several fundamental facts of geometric measure theory that we  survey in Appendix \ref{s: c}. The    construction  has a Morse theoretic  flavor,  and it   is based  on two key principles.
  
  \begin{itemize}
  
  \item  \emph{An uniqueness   result   closely related   to the uniqueness results of  J. Fu \cite[Thm. 3.2]{Fu2}.}        Loosely speaking, this uniqueness result  states that there exists   a unique  Legendrian cycle in $\Sigma\dual\times \bsV$ that  catalogs  in   a certain explicit fashion (see Remark \ref{rem: normc}(a)) the  Morse theoretic properties of the restrictions to $X$ of   generic linear functions on $\bsV$.  When it exists, this unique cycle is the  normal cycle $\bsN^X$ of $X$.

  \item \emph{An approximation process   pioneered by  J. Fu \cite{Fu2}.}  More precisely, we show that for any    compact subanalytic set $X$ we can find  a    family   of  bounded domains  $(X_\ve)_{\ve >0}$ with  $C^3$-boundaries such that $X=\cap_{\ve>0}X_\ve$ and the normal cycles $\bsN^{X_\ve}$ converge in the sense of currents  to a current satisfying the requirements of the uniqueness theorem. Thus, the limit cycle must be the normal cycle of $X$.

  \end{itemize}

The  resulting correspondence $X\mapsto\bsN^X$, $X$ bounded subanalytic set,   satisfies the inclusion-exclusion principle  (\ref{eq: inc-exc}) and for $PL$ sets  it coincides  with the normal cycle   constructed in \cite{CMS}. The concrete  implementation of our  strategy   was possible only due   to the    recent advances in $o$-minimal   topology.   Here is a more technical description of our main results.  

Let $\bsV$ be an oriented real Euclidean  vector space of dimension  $n$. Denote by $\bsV\dual$ its  dual,  and by $\Sigma\dual$ the unit sphere in $\bsV\dual$.  We identify  the cotangent bundle $T^*\bsV$ with the product $\bsV\dual\times \bsV$.    We have two canonical projections
 \[
 p: \bsV\dual\times\bsV\ra \bsV\dual,\;\;\pi: \bsV\dual\times\bsV\ra \bsV.
 \]
Let $\lan -,-\ran: \bsV\dual\times \bsV\ra   \bR$ denote   the canonical pairing
 \[
 \bsV\dual\times \bsV\ni (\xi, x)\mapsto  \lan\xi, x\ran:= \xi(x)\in\bR.
 \]
The Euclidean  metric $(-,-)$ on $\bsV$  defines isometries (the classical lowering/raising the indices operations)
 \[
 \bsV\ni x\mapsto x_\dag\in \bsV\dual,\;\;\bsV\dual\ni \xi\mapsto\xi^\dag\in\bsV,
 \]
 \[
 \lan x_\dag,  y\ran =(x,y),\;\; \lan \xi, y\ran =(\xi^\dag, y),\;\;\forall x,y\in\bsV,\;\;\xi\in\bsV\dual.
 \]
  Let $\alpha\in\Omega^1(T^*\bsV)$ denote the canonical $1$-form on the cotangent bundle.  More explicitly, if $x^1,\dotsc, x^n$
 are Euclidean  coordinates on $\bsV$, and $\xi_1,\dotsc, \xi_n$ denote the induced Euclidean coordinates on $\bsV\dual$, then
 \[
 \alpha=\sum_i \xi_i dx^i.
 \]
We denote by $\omega\in \Omega^2(T^*\bsV)$ the associated symplectic form
\[
\omega=-d\alpha=\sum_i dx^i\wedge d\xi_i.
\]
In this paper we will work extensively with  subanalytic  objects.     Our subanalytic  sets are the  sets  in the $o$-minimal structure  ${\bR}_{\mathrm{an}}$ as defined  Appendix \ref{s: b}  to which we refer  for more details. 

We will work with special classes of currents.  For the reader's convenience we have gathered in Appendix \ref{s: c}    the basic  notations  and facts  involving currents that we use throughout this paper. For any closed subanalytic subset $X\subset \bsV\dual\times \bsV$ we denote  by $\eC_k(X)$   the Abelian group of  subanalytic, $k$-dimensional currents  with support on $X$; see  Appendix \ref{s: c} for details.    More precisely, $\eC_k(X)$ is the Abelian subgroup of $\Omega_k(\bsV\dual\times \bsV)$ spanned by the currents of integration over oriented  $k$-dimensional  subanalytic submanifolds contained in $X$.  If $S\in \eC_k(\Sigma\dual\times \bsV)$, and $\xi\in \Sigma\dual$, we denote by $S_\xi$ the $p$-slice of $S$ over $\xi$,
 \[
 S_\xi:=\lan S, p, \xi\ran\in \eC_{k-\dim\Sigma\dual}(\Sigma\dual\times \bsV)
 \]
 As explained in Appendix \ref{s: c}, the slice  $S_\xi$  exists for all $\xi$ outside a codimension $1$ subanalytic subset of $\Sigma\dual$ and it is supported on the fiber $p^{-1}(\xi)\cap\supp S$.    More precisely, $S_\xi$ is well defined if the  fiber $p^{-1}(\xi)\cap \supp S$ has the expected  dimension, $\dim S-\dim \Sigma\dual$.
  
   If $S$   is  the current of integration along an oriented $k$-dimensional  manifold, then for generic $\xi$ the slice $S_\xi$ is the current of integration along the fiber $S\cap p^{-1}(\xi)$ equipped with a canonical orientation.  In general, the slice gives a precise meaning   as a current to the intersection of $S$ with the fiber $p^{-1}(\xi)$, provided that this intersection has the ``correct'' dimension.

If $X\subset \bsV$ is a compact subanalytic set,  $\xi\in \Sigma\dual$,   and $x\in X$ we set
\[
X_{\xi>\xi(x)}:=\bigl\{  y\in X;\;\;\xi(y)>\xi(x)\,\bigr\},\;\; i_X(\xi, x):=1-\lim_{r\searrow 0}\chi\bigl(\, B_r(x)\cap X_{\xi>\xi(x)}\,\bigr),
\]
where $\chi$ denotes the Euler characteristic of a topological space. If $x\in \bsV\setminus X$ we set $i_X(\xi,x)=0$. 

The integer $i_X(\xi,x)$  can be interpreted as a  Morse index   of  the function $-\xi: X\ra \bR$ at $x$; see \cite[\S 9.5]{KaSch} or  Appendix \ref{s: a}. For generic $\xi\in\Sigma\dual$,  we have $i_X(\xi,x)=0$, for all but finitely many points $x\in  X$.

The first goal of this paper  is to  give a very short proof of  the following uniqueness result  closely related   to the uniqueness result of  J. Fu,  \cite[Tm. 3.2]{Fu2}.

\begin{theorem}[Uniqueness] Let $X$  be a compact subanalytic subset   of $\bsV$. Then there exists at most one subanalytic  current $N\in \eC_{n-1}(\Sigma\dual\times\bsV)$  satisfying the following conditions.

\begin{enumerate}

\item The current $N$ is a cycle, i.e., $\pa N=0$.

\item  The current $N$ has compact support.

\item The current $N$ is Legendrian, i.e.,
\[
\lan \alpha\cup \eta,  N\ran=0,\;;\forall \eta\in \Omega^{n-2}\bigl(\, \Sigma\dual\times\bsV\,\bigr).
\]

\item   For any smooth function   $\vfi\in C^\infty(\Sigma\dual\times\bsV)$   we have
\begin{equation}
\lan \vfi dV_{\Sigma\dual}, N\ran=\int_{\Sigma\dual} \Bigl(\sum_{x\in X}  \vfi(\xi,x) i_X(\xi,x)\Bigr)\,dV_{\Sigma^\dual}.
\label{eq: morse-sl}
\end{equation}
\end{enumerate}
\label{th: fu1}
\end{theorem}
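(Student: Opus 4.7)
The plan is to argue by linearity: if $N_1, N_2$ both satisfy the four listed conditions, then $N := N_1 - N_2$ is a compactly supported subanalytic Legendrian cycle in $\eC_{n-1}(\Sigma\dual\times\bsV)$ satisfying
$$\langle \varphi\, dV_{\Sigma\dual}, N\rangle = 0 \quad \text{for every } \varphi\in C^\infty(\Sigma\dual\times\bsV),$$
and the goal is to deduce $N = 0$.

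The first step is to unfold this vanishing via the subanalytic slicing formula from Appendix \ref{s: c}:
$$\langle \varphi\, dV_{\Sigma\dual}, N\rangle = \int_{\Sigma\dual} \langle \varphi(\xi,\cdot), N_\xi\rangle\, dV_{\Sigma\dual}(\xi).$$
Plugging in product test functions $\varphi(\xi,x) = \psi(\xi)\chi(x)$ and letting $\chi$ range over a countable family dense in $C^\infty_c(\bsV)$, I would conclude that the $0$-dimensional slice $N_\xi$ vanishes for almost every $\xi\in\Sigma\dual$.

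Next I would subanalytically stratify $\supp N$ so that on each top stratum $M$ the projection $p|_M$ has constant rank, and split $N = N^h + N^v$, where $N^h$ gathers those strata on which $p|_M$ has full rank $n-1$ (a local diffeomorphism onto its image in $\Sigma\dual$) and $N^v$ collects those of strictly smaller rank. For generic $\xi$, the slice $N^h_\xi$ is a signed combination of point masses supported at the discrete preimages $(p|_M)^{-1}(\xi)$, weighted by the integer multiplicities carried by $N$ on each stratum; subanalytic genericity forces these preimages to be pairwise distinct across strata, so combined with $N^h_\xi=0$ for a.e.\ $\xi$ every multiplicity must vanish and hence $N^h=0$. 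Consequently $N = N^v$ is supported on a set whose $p$-image $A\subset\Sigma\dual$ is subanalytic of dimension strictly less than $n-1$; one must of course check that the decomposition respects the cycle and Legendrian conditions, but this follows since $N^h$ and $N^v$ are separately subanalytic currents and $\partial N^v = -\partial N^h$ is supported on lower-dimensional strata whose projection still misses an open set in $\Sigma\dual$.

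The crux, which I expect to be the main obstacle, is to show that no nonzero such ``vertical'' Legendrian cycle can exist. The Legendrian identity $\alpha|_{TM}=0$ forces, at each smooth point $(\xi,x)\in\supp N^v$, the $T\bsV$-projection of the tangent plane to lie inside the hyperplane $\ker\xi\subset T_x\bsV$; since this hyperplane distribution is integrable on $\bsV$, the fibers of $p|_{\supp N^v}$ over smooth points of $A$ are locally contained in affine translates of $\ker\xi$. I would complete the argument by subanalytic induction on $\dim A$. In the base case $\dim A=0$ the support of $N^v$ is contained in finitely many parallel affine hyperplanes of $\bsV$, and a direct integration by parts against compactly supported $(n-2)$-forms, using $\partial N^v=0$ and compact support, forces the density on each hyperplane to vanish. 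In the inductive step I would slice $N^v$ by a generic affine hyperplane of $\Sigma\dual$ transverse to the top stratum of $A$, producing a lower-dimensional compactly supported Legendrian cycle whose $p$-image has strictly smaller dimension, to which the inductive hypothesis applies.
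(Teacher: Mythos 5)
Your reduction to showing that the difference $N=N_1-N_2$, a compactly supported subanalytic Legendrian cycle with $N_\xi=0$ for a.e.\ $\xi$, must vanish is the right starting point, and the elimination of the ``horizontal'' part $N^h$ is sound: generic slices see only the strata where $p$ is a local diffeomorphism, so the vanishing of $N_\xi$ forces the integer multiplicities on those strata to be zero, and afterwards $N=N^v$ is a genuine cycle (so the worry about $\pa N^v=-\pa N^h$ disappears). Your base case $\dim p(\supp N^v)=0$ is also correct: the support sits in finitely many $\{\xi\}\times\bsV$, the Legendrian condition confines each piece to a finite union of hyperplanes parallel to $\ker\xi$, and the constancy theorem kills a compactly supported $(n-1)$-cycle in such planes.

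The gap is in the inductive step. Slicing $N^v$ by a generic affine hyperplane $H\subset\Sigma\dual$ produces an $(n-2)$-current in $(\Sigma\dual\cap H)\times\bsV$, but this is not a Legendrian cycle of the same type in a lower-dimensional instance of the problem: the ambient $\bsV$ has not dropped dimension, the restricted $1$-form is not the contact form of a smaller sphere bundle, and, most importantly, the inductive hypothesis you would want to invoke (``such a cycle with vanishing generic $0$-slices is zero'') is exactly the statement you are trying to prove one dimension down, with no mechanism supplied to verify its hypotheses for the sliced current. The geometric input you correctly identify--the Legendrian condition confines the $\bsV$-tangent of $\supp N^v$ to $\ker\xi$--is not by itself enough; what is actually needed is the stronger consequence $\omega\cap N=0$ (which does follow from $\alpha\cap N=0$ and $\pa N=0$ via the homotopy formula), applied to the slice over an \emph{$m$-dimensional} base where $m=\dim p(\supp N^v)$, to pin each fiber into finitely many \emph{specific} $(n-1-m)$-planes $\bsT_\xi=(T_\xi\,p(\supp N^v))^\perp$. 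The paper packages exactly this: it cones $N$ to a conical Lagrangian cycle $S=\mu_*([0,\infty)\times N)+z_*D$ in $T^*\bsV$ (the filling $D$ of $\pi_*N$ being supplied by the acyclicity of $\bsV$ and made unique by the constancy theorem), translates the vanishing of slices into $\dim p(|S|)<n$ via Remark~\ref{rem: v}, and then proves Theorem~\ref{th: fu11} in a single non-inductive step by slicing $S$ over a generic $m$-plane $\bsU\subset\bsV\dual$ with $m=\dim p(|S|)$ and using $\omega\cap S=0$ plus constancy to contradict the fiber dimension. This one-shot slicing over the full $m$-dimensional image is what replaces, and repairs, your proposed hyperplane induction.
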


\begin{remark} (a) Using \cite[Thm. 4.3.2.(1)]{Feder} we deduce that the  equality   (iv) is equivalent with the  condition
\begin{equation*}
N_\xi= \sum_{x\in X} i_X(\xi,x)\delta_{(\xi,x)},\;\;\mbox{for almost all $\xi\in\Sigma\dual$},
\tag{$\ast$}
\label{tag: ast}
\end{equation*}
where $\delta_{(\xi,x)}$ denotes the  canonical $0$-dimensional current determined by the point $(\xi,x)$. The points $x$ for  which $i(x,\xi)\neq 0$ should be viewed as  critical points of the function $-\xi: X\ra \bR$; see \cite[\S 5.4]{KaSch} or Appendix \ref{s: a}.   Thus, the slice  $N_\xi$   records  both the collection of critical points of $-\xi|_X$ and  their Morse indices.    

\noindent (b) Our sign conventions are different from the ones used in \cite{Ber, Fu2}, but they coincide with the conventions in \cite{CMS}. 
\label{rem: normc}\qed
\end{remark}

When a cycle   as in Theorem \ref{th: fu1} exists, it is called the \emph{normal cycle} of $X$, and we will denote  it by $\bsN^X$.  In the remarkable paper \cite{Fu2}, J. Fu  proved    the following result.

\begin{theorem}[Existence] Every compact subanalytic set $X\subset \bsV$ has a normal cycle $\bsN^X$.\qed
\label{th: fu2}
\end{theorem}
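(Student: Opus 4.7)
The strategy, modeled on the approximation program of Fu, is to exhibit $\bsN^X$ as the flat limit of normal cycles of domains with smooth boundary that shrink down onto $X$, and then appeal to the uniqueness Theorem \ref{th: fu1} to certify that the limit is the required current. The proof divides into three steps: constructing the approximating family, extracting a convergent subsequence by compactness, and verifying the Morse-theoretic slice identity in the limit.

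\emph{Construction of the approximating family.} Using the $o$-minimal tools of Appendix \ref{s: b}, I would select a proper, nonnegative, subanalytic function $f : \bsV \to \bR$ with $f^{-1}(0) = X$ — for instance, a subanalytic refinement of the squared distance to $X$, made $C^3$ off of $X$ via an $o$-minimal stratification argument. The $o$-minimal Sard theorem ensures that almost every sufficiently small $\ve > 0$ is a regular value of $f$, so that $X_\ve := f^{-1}([0,\ve])$ is a compact subanalytic domain with $C^3$ boundary. Restricting $\ve$ to such values produces a nested family with $X = \bigcap_\ve X_\ve$, and Example \ref{ex: reg} exhibits the normal cycle $\bsN^{X_\ve}$ as the integration current along the graph of the unit outer normal field of $\pa X_\ve$ inside $\Sigma\dual \times \bsV$.

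\emph{Compactness.} I would establish a uniform mass bound on $\bsN^{X_\ve}$ for $\ve \le \ve_0$, using a Cauchy--Crofton type identity representing $\mathbf{M}(\bsN^{X_\ve})$ as the integral over $\xi \in \Sigma\dual$ of $\sum_{x}|i_{X_\ve}(\xi,x)|$, combined with the $o$-minimal uniform finiteness of the number of critical points of linear functions in a subanalytic family. Together with the trivial support bound $\supp \bsN^{X_\ve} \subset \Sigma\dual \times X_{\ve_0}$, the Federer--Fleming compactness theorem recorded in Appendix \ref{s: c} extracts a subsequence $\bsN^{X_{\ve_j}}$ converging in the flat topology to a subanalytic current $N \in \eC_{n-1}(\Sigma\dual \times \bsV)$. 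Conditions (i), (ii), and (iii) of Theorem \ref{th: fu1} transfer to $N$ automatically, since being a cycle, having support in $\Sigma\dual \times X_{\ve_0}$, and the linear Legendrian identity $\lan \alpha \cup \eta, N\ran = 0$ are all preserved under flat limits.

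\emph{Slice identity and main obstacle.} For each approximant $X_\ve$, condition (iv) of Theorem \ref{th: fu1} holds by classical smooth Morse theory applied to $-\xi|_{\pa X_\ve}$. The left-hand side of (\ref{eq: morse-sl}) for $\bsN^{X_{\ve_j}}$ passes to the limit by flat convergence against the test form $\vfi\, dV_{\Sigma\dual}$. For the right-hand side I would argue that at almost every $\xi \in \Sigma\dual$, the Morse data $i_{X_\ve}(\xi,\cdot)$ converge to $i_X(\xi,\cdot)$ in the following sense: each critical point $x$ of $-\xi|_X$ in the stratified Morse sense of Appendix \ref{s: a} is surrounded, for small $\ve$, by a cluster of critical points of $-\xi|_{X_\ve}$ whose indices sum to $i_X(\xi,x)$, with no mass escaping to the regular part. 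Together with the uniform $L^1$ control on $\xi \mapsto \sum_x|i_{X_\ve}(\xi,x)|$ from the compactness step, dominated convergence delivers (\ref{eq: morse-sl}) for $N$. By Theorem \ref{th: fu1}, $N$ is unique, so the full family $\bsN^{X_\ve}$ converges to $N = \bsN^X$. The main obstacle is precisely this Morse-theoretic continuity: proving that the stratified Morse indices of $-\xi|_X$ are the genuine limits of the smooth Morse indices of $-\xi|_{X_\ve}$ along a subanalytic approximation, with enough integrable uniformity to interchange the limit with the integral over $\Sigma\dual$. This is where the full strength of $o$-minimal tameness — finiteness of the critical structure along subanalytic families and the local conical structure near critical values — enters the proof in an essential way.
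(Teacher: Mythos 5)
Your strategy matches the paper's proof of Theorem \ref{th: fu2} essentially step for step: approximate $X$ by sublevel sets $X_\ve$ of a nonnegative subanalytic $C^p$ function vanishing exactly on $X$ (the paper uses $g=f^2$ with $f$ a $C^p$ subanalytic defining function from \cite[Thm. C.11]{MvD}, restricted to a large ball to control regular values), extract a flat-convergent subsequence of the normal cycles $\bsN^{X_\ve}$ by compactness, and certify the limit via the uniqueness theorem. You also correctly isolate the real work — the Morse-index continuity (\ref{eq: limit1}): that each homological critical point of $-\xi|_X$ is the limit of a cluster of critical points of $-\xi|_{X_\ve}$ whose indices sum to $i_X(\xi,x)$ — together with the uniform $L^1$ bound (\ref{eq: bound}) feeding a dominated convergence argument; this is exactly the paper's Lemma \ref{lemma: limit} plus the definable-family cardinality bound (\ref{eq: card}).

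One step as written would fail: your mass bound. You assert a ``Cauchy--Crofton type identity'' representing $\mathbf{M}(\bsN^{X_\ve})$ as $\int_{\Sigma\dual}\sum_x|i_{X_\ve}(\xi,x)|\,dV_{\Sigma\dual}$. That integral is the coarea of the $p$-slices, which only bounds the mass from \emph{below} (the tangential Jacobian of $p$ is $\le 1$); to obtain an \emph{upper} bound by integral geometry you would have to integrate over the entire Grassmannian of codimension-$(n-1)$ affine planes in $\Sigma\dual\times\bsV$, not only over the fibers of $p$. The paper avoids this entirely: since $(\supp\bsN^\ve)_{\ve\in(0,c_0)}$ is a definable family of compact $C^p$ manifolds, the uniform volume bound for definable families (Appendix \ref{s: b}) gives $\sup_\ve\mathbf{M}(\bsN^{\ve})<\infty$ at once. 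You also pass over \emph{why} the flat limit $N$ is subanalytic — the Federer--Fleming theorem only produces an integral current — whereas the paper explicitly shows $\supp N\subset\cl(\eZ)\setminus\eZ$ with $\eZ=\bigcup_\ve\supp\bsN^\ve$ subanalytic of dimension $n$, so $\supp N$ lies in a subanalytic set of dimension $<n$. Both points are repairable within your framework, but they do need to be addressed for the argument to close.
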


\begin{remark} The  \emph{conormal cycle} $\bsS^X\in \eC_n(T^*\bsV)$  of $X$ constructed by  Kashiwara and Schapira, \cite{KaSch} can be obtained from normal cycle    $\bsN^X$  using  a coning procedure  described explicitly in (\ref{eq: cone}).  The equality (\ref{tag: ast}) is  then a special  of the micro-local index theorem \cite[Thm. 9.5.6]{KaSch}.\qed
\end{remark}

The second goal  of this    paper    is to show that Theorem \ref{th: fu2} is a consequence of Theorem  \ref{th: fu1}.  The  proof    takes full advantage of the subanalytic  context of the problem which prohibits many of the possible pathologies in geometric measure theory. In particular, the  geometric cores   of the arguments are  much more transparent in this context.

\begin{remark} (a)      A  sheaf-theoretic  approach  to the existence of normal cycles based on a conceptually similar approximation method   can be found in \cite[\S 5.2.2]{Schu}. The  concept of  limit of currents is replaced by the concept of specialization, while the uniqueness theorem is replaced   by the injectivity of a certain morphism in Borel-Moore homology, \cite[Eq. (5.19)]{Schu}.  This  injectivity  is ultimately based on a special property of Verdier stratifications, \cite[Cor. 8.3.23]{KaSch}.

 (b)   We    want to point out a key technical difference between the approach in this paper and the approach in \cite{Fu2}.         The uniqueness theorem  \cite[Thm. 3.2]{Fu2}  is formulated in terms of    an integral cycle $\eI(\bsN,\xi,t)\in\eC_{n-1}(\Sigma\dual\times\bsV)$ defined  for \emph{any}  Legendrian cycle $\bsN\in \eC_{n-1}(\Sigma\dual\times\bsV)$ and for almost all $(\xi,t)\in\Sigma\dual\times\bR$; see \cite[Def. 3.1]{Fu2}.    The construction of  the cycle $\eI(\bsN,\xi,t)$ is quite involved, but it has a nice payoff  because it shows that $\eI(\bsN,\xi,t)$  depends continuously on $\bsN$.  This fact     is very useful in      approximation problems.    Our approach  skips  the  construction  of $\eI(\bsN,\xi,t)$, but we have to pay a  small price since the proofs of our convergence results  are a bit more involved.

 (c)    We   believe that  the arguments   used in this paper can  yield an existence result for normal cycles of compact sets in an $o$-minimal   category.   We chose  to work in the subanalytic category   only because of a lack of adequate references for a theory of slicing    of   $o$-minimal currents.     Hardt's subanalytic work \cite{Hardt, Hardt2}   ought to extend with minor  changes to an $o$-minimal context.      \qed
 \label{rem: long}
\end{remark}

Here is a brief outline of the paper. We prove Theorem \ref{th: fu1}   in Section  \ref{s: 3} relying on  an $o$-minimal implementation of  the  strategy in  \cite{Fu1} that affords considerable simplifications.  We construct the normal cycle in Section  \ref{s: 4} via an approximation method. Section \ref{s: 5}   describes how  a combination of  facts proved in this paper and in \cite{Ber} leads to   an alternate proof of the existence part of \cite[Thm. 6.2]{Ber}.  In Section \ref{s: app}  we give an alternate proof  to  the  main convergence   theorem in \cite{Fu3}. 

  We  have included  two  appendices  \ref{s: b}, \ref{s: c}  that survey basic facts about subanalytic sets and currents  used throughout the paper.  In Appendix \ref{s: a}  we   present short  $o$-minimal proofs  of some  basic   facts  of singular Morse theory that we use in  the paper  and  in our opinion  are  not  widely known. In particular, we have included a very short proof of Kashiwara's  non-characteristic deformation lemma in an $o$-minimal setting.


\section{Uniqueness}
\label{s: 3}
\setcounter{equation}{0}

We will  prove Theorem \ref{th: fu1} following a  strategy  that is inspired from \cite{Fu1}.  We first  give a  direct and very short proof  of  a subanalytic version of the general   uniqueness theorem \cite[Thm. 1.1]{Fu1}.  Then, arguing as in the proof of \cite[Thm. 4.1]{Fu1},  we show that   this theorem implies Theorem \ref{th: fu1}.

 \begin{theorem}    Suppose  $S\in \eC_n (\bsV\times \bsV)$   is a subanalytic  current $n$-dimensional current satisfying the following conditions.

 \begin{enumerate}

 \item  The current $S$ is a cycle,  $\pa S=0$,

 \item The current $S$  is lagrangian, i.e., $\omega\cap S=0$.

 \item  The current  $S$ is conical, i.e.,
 \[
 (\mu_\lambda)_* S=S,\;\;\forall \lambda >0,
 \]
 where $\mu_\lambda:\bsV\dual \times \bsV \ra \mu_\lambda:\bsV\dual \times \bsV$ is the rescaling $\mu_\lambda(\xi,x)=(\lambda\xi, x)$.

 \item  If $|S|$ denotes the support of $S$, then the induced map $p:|S|\ra \bsV\dual$ is proper.

 \item     The   set  $p(|S|)$ is a  conical subanalytic subset of $\bsV\dual$ of dimension  $< n=\dim\bsV\dual$.

 \end{enumerate}

 Then $S=0$.
 \label{th: fu11}
 \end{theorem}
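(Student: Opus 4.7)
The plan is to adapt Fu's strategy from \cite{Fu1}, exploiting the subanalytic framework for significant simplification. I would proceed in three stages: a conic reduction, a slicing argument, and a Legendrian rigidity argument.

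\emph{Conic reduction.} Using the conic invariance (iii) and $p$-properness (iv), I would decompose $S = S_0 + C(N)$ in the subanalytic category, where $S_0$ is the portion of $S$ supported on the zero-section $\{0\}\times\bsV$ (compactly supported by (iv)) and $C(N) = \Phi_*([0,\infty)\times N)$, with $\Phi(t,\xi,x) = (t\xi,x)$, is the cone over the $(n-1)$-current $N = \langle S,|\xi|,1\rangle$ on $\Sigma\dual\times\bsV$. Properness and subanalyticity ensure $|N|$ is compact, and the conditions on $S$ transfer to $N$: the cycle condition yields $\pa N=0$ (boundary contributions from $S_0$ lie on the zero-section, disjoint from $|\xi|=1$), the Lagrangian condition (ii) yields Legendrianness $\alpha\cap N=0$, and hypothesis (v) gives $\dim p(|N|) < n-1 = \dim\Sigma\dual$. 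The identity $\pa S=0$ also yields $\pa S_0 = -\pi_* N$ (where $\pi:\Sigma\dual\times\bsV\to\bsV$), so once $N=0$ is established, $S_0$ becomes a compactly supported subanalytic $n$-cycle on $\bsV\cong\bR^n$ and must vanish by the absence of compactly supported top-dimensional cycles in $\bR^n$.

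\emph{Slicing.} It remains to show $N=0$. Apply Federer's slicing theorem (\cite[4.3.2]{Feder}, surveyed in Appendix~\ref{s: c}) to $p:\Sigma\dual\times\bsV\to\Sigma\dual$: the slice $N_\xi$ is well-defined for almost every $\xi\in\Sigma\dual$ and supported on $p^{-1}(\xi)\cap|N|$. Since $\dim p(|N|)<\dim\Sigma\dual$, the fiber is empty for a.e.\ $\xi$, so $N_\xi=0$ a.e. The integration formula of Federer 4.3.2(1) then yields $\langle f\cdot p^*(dV_{\Sigma\dual}),N\rangle=0$ for every smooth $f$, pinning down the pure-$d\xi$ component of $N$.

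\emph{Legendrian rigidity, the main obstacle.} The hardest step is passing from the vanishing of the pure-$d\xi$ component to $N=0$; here the Legendrian condition $\alpha\cap N=0$ and the cycle condition $\pa N=0$ must be fully exploited. At a generic smooth point of a top-dimensional subanalytic stratum of $|N|$, the tangent $(n-1)$-plane $T$ is Legendrian and satisfies $\dim dp(T) = k < n-1$; a direct computation shows that these constraints force $T$ to be the graph over $dp(T)\subset T_\xi\Sigma\dual$ of a symmetric linear map into $\xi^\perp\subset\bsV$, plus a vertical subspace $\{0\}\oplus(dp(T)\oplus\bR\xi)^\perp$. Thus $|N|$ is locally a subanalytic family of linear Legendrian pieces over $p(|N|)$. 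Integrating out the vertical directions and using $\pa N=0$ together with $\alpha\cap N=0$ reduces the problem to a lower-dimensional instance of the same uniqueness statement on $p(|N|)\times\bsV$, closing an induction on $\dim p(|N|)$. The main technical obstacle is verifying that boundary contributions between strata cancel correctly in this induction; the $o$-minimal Whitney-type stratifications available in the subanalytic setting make this bookkeeping manageable.
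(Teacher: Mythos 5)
The first two stages of your outline are sound and consistent with the general shape of the argument (the conic decomposition is standard, and the slicing step correctly records that $N_\xi=0$ for almost every $\xi$, which is just hypothesis (v) restated for the link $N$). The difficulty is your third stage, which you yourself flag as ``the main obstacle,'' and which is exactly where the theorem lives: you need to pass from ``$N_\xi=0$ almost everywhere'' to ``$N=0$,'' and this is far from automatic --- a Legendrian cycle can perfectly well have $\dim p(|N|)<\dim\Sigma\dual$ without being zero unless the cycle and Legendrian conditions are exploited in a substantive way. Your sketch of that step is not a proof. The claim that the tangent plane $T$ at a generic smooth point of $|N|$ is ``the graph of a symmetric linear map \ldots plus a vertical subspace'' is asserted as ``a direct computation,'' but that computation is not given and it is not clear it is correct as stated (the symmetry of a graph map comes from the Lagrangian/Legendrian condition only when the projection to the base is a local diffeomorphism, which is precisely what fails here). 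More importantly, the proposed induction on $\dim p(|N|)$ --- ``integrate out the vertical directions \ldots reduces the problem to a lower-dimensional instance'' --- is not set up: you do not say what the lower-dimensional current is, why it again satisfies the hypotheses of the theorem, or how the boundary terms between strata vanish. You concede this bookkeeping is ``the main technical obstacle''; an obstacle labelled ``manageable'' is not an obstacle surmounted.

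For contrast, the paper does not reduce to $N$ and does not induct. It argues directly by contradiction. Set $m=\dim p(|S|)$; the case $m=0$ is immediate from the constancy theorem. For $m>0$ one chooses a generic $m$-dimensional subspace $\bsU\subset\bsV\dual$ with orthogonal projection $\Phi$, sets $\Psi=\Phi\circ p$, and stratifies so that over a dense open $Z'\subset\Phi(p(|S|))$ the map $\Psi$ is a fibration with $(n-m)$-dimensional fibres $Y'_\zeta$ and the fibres $\Xi_\zeta=\Phi^{-1}(\zeta)\cap p(|S|)$ are finite. The Lagrangian condition $\omega\cap S=0$ then forces the tangent plane to $Y'_\zeta$ at any generic point over $\xi\in\Xi_\zeta$ to equal a fixed $(n-m)$-plane $\bsT_\xi$ depending only on $\xi$; hence the compactly supported $(n-m)$-cycle $\langle S,\Psi,\zeta\rangle$ is supported on finitely many $(n-m)$-planes and vanishes by the constancy theorem. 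This contradicts $\dim Y'_\zeta=n-m$. The key move you are missing is exactly this use of the generic auxiliary projection $\Psi$ together with the constancy theorem applied to the $\Psi$-slices of $S$; that is what turns the Lagrangian condition into the required rigidity, in one step and without any induction.
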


 \begin{proof}  We argue  by contradiction. Let $m:=\dim p(|S|)$ so  that $m<n$. If $m=0$, we deduce  that   $|S|\subset \bsV=p^{-1}(0)$.   The \emph{cycle}  $S$ is  subanalytic,   and  of dimension $n= \bsV$ so that   $S=k[\bsV]$ for some integer $k$. On the other hand, conditions (iii) and (iv) imply that  $\pi(|S|)$ is compact, so that $k=0$, and therefore $S=0$ by the constancy theorem.

Suppose $m>0$.   Then we can find a  $m$-dimensional subspace $\bsU\subset \bsV\dual$ so that if $\Phi:\bsV\dual\ra \bsU$ denotes the orthogonal projection onto $\bsU$, then $Z:=\Phi\circ p(|S|)$ is an $m$-dimensional tame subset of $\bsU$.   Moreover, most of the fibers of the  induced map  $\Phi_S:=\Phi|_{p(|S|)}: p(|S|)\ra  Z$ are zero-dimensional. Set $\Psi:=\Phi_S\circ p$; see Figure \ref{fig: 3}.
\begin{figure}[ht]
\centering{\includegraphics[height=2.5in,width=2.5in]{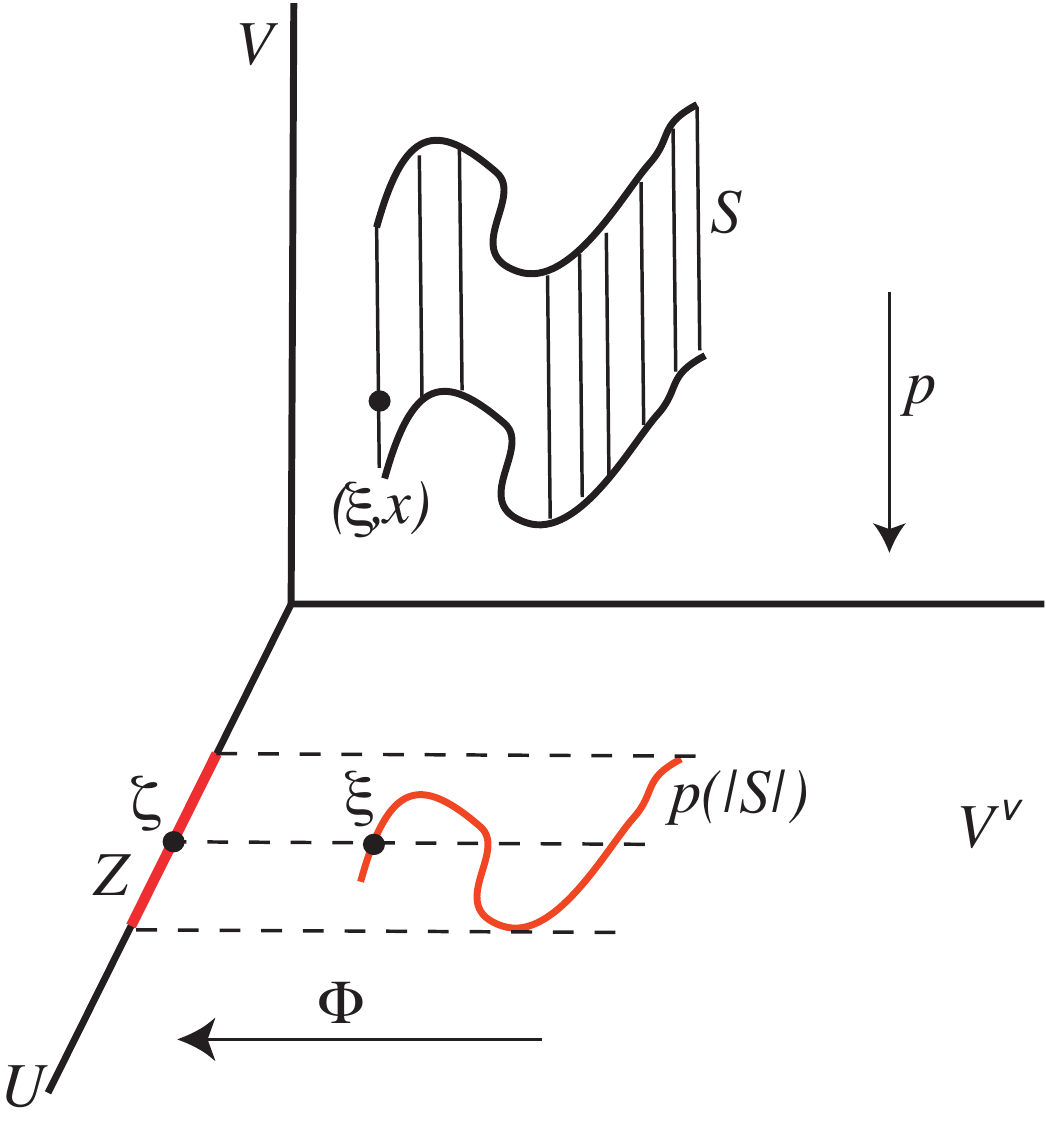}}
\caption{\sl  A rendition of $p(|S|)$. The picture is not entirely accurate since $p(|S|)$ must be a conical subset of $\bsV\dual$.}
\label{fig: 3}
\end{figure}

From the  properties  of  subanalytic sets, or more generally,  sets in an $o$-minimal category, \cite[\S 4]{MvD} or \cite{Dr}, we deduce  that there exists a subanalytic subset $Z'\subset Z$ whose complement has dimension $<m$ such that  following hold.

\begin{itemize}

\item[(c1)] $Z'$ is a $C^2$-manifold.

\item[(c2)]  The induced  map $\Psi: Y':=\Psi^{-1}(Z')\ra   Z'$ is a locally (definably) trivial fibration with $(n-m)$-dimensional fibers. Set $Y'_\zeta:=\Psi^{-1}(\zeta)$, $\zeta\in Z'$.

\item[(c3)] If $w\in Y'$ and near $w$ the set $Y'$ is a $C^2$ manifold, then the differential of $\Psi$ at $w$ is a surjection $\Psi: T_wY'\ra \bsU$.

\item[(c4)] The set $\Xi_{Z'}:=\Phi^{-1}(Z')\cap p(|S|)\subset \bsV\dual$ is a $C^2$-manifold and the induced map $\Xi_{Z'}\stackrel{\Phi}{\ra} Z'$ is a submersion.

\item[(c5)] For any $\zeta\in Z'$ the set $\Xi_\zeta:=\Phi^{-1}(\zeta)\cap p(|S|)$   is finite.\footnote{The definability of the Euler characteristic \cite[\S 4.2]{Dr} implies that   the cardinality of $\Xi_\zeta$ is bounded  from above by a constant independent of $\zeta$.}     In particular, for any $\zeta\in Z'$ the fiber $Y'_\zeta$ is contained in the finite union of planes $\Xi_\zeta\times \bsV$.

\item[(c6)]  For any $\zeta\in Z'$ the slice $\lan S, \Psi, \zeta\ran$ is well defined. It is an $(n-m)$-cycle with support $\cl(Y_\zeta')$.

\end{itemize}

There exists a subanalytic set  $Y''\subset Y'$ of dimension $<n$ such any  $w=\xi\oplus x$ in $Y'\setminus Y''$ belongs  both to the $C^2$-locus of $Y'$,  and to the $C^2$-locus of the fiber   $Y'_{\zeta=\Phi(\xi)}$ that contains  $w$.

 Consider an arbitrary point  $w=\xi\oplus x\in Y'\setminus Y''$  and then choose  a   vector $\dot{w}_1=\dxi_1\oplus \dx_1$  tangent  at $w$   to the fiber $Y'_\zeta$, $\zeta=\Phi(\xi)$.  The condition (c5) shows that  $Y'_\zeta$ is contained in the finite union of planes $\Xi_\zeta\times \bsV$. This implies that  $\dxi_1=0$.

 Using the fact that $S$ is a lagrangian current, i.e.,  $\omega\cap S=0$,  we  deduce that for any $\dot{w}_2=\dxi_2\oplus \dx_2\in T_w Y'$  we have
\[
0=\omega(\dot{w}_1,\dot{w}_2)=\lan \dxi_2,\dx_1\ran - \lan\dxi_1,\dx_2\ran = \lan \dxi_2,\dx_1\ran.
\]
If we denote by $\dx_1^\dag\in\bsV\dual$ the covector dual to  $\dx_1$ we deduce from the above that $\dx_1^\dag$ is perpendicular to $p(T_w |S|)$.   This is an   $m$-dimensional subspace of $\bsV\dual$. At  the point $w=\xi\oplus x$ the  linear map $p: T_wT'\ra T_\xi \Xi_{Z'}$  must be a surjection.     Thus $\dx_1^\dag\perp T_\xi\Xi_{Z'}$.    We deduce that the tangent plane to $Y'_\zeta$ at $\xi\oplus x$     coincides with the plane $\bsT_\xi$,
\[
\bsT_\xi:=\bigl\{ x\in \bsV;\;\; \lan\dxi, x\ran=0,\;\;\forall \dxi\in T_\xi\Xi_{Z'}\,\bigr\}
\]
As we already know, $Y'_\zeta$ is contained in the finite union of planes $\Xi_\zeta\times \bsV$. The above    remarks show that for any $\xi \in \Xi_\zeta$,  and any $C^2$-point    $w$ of the component  of $Y'_\zeta$ contained in $\{\xi\}\times \bsV$  the tangent space $T_wY'_\zeta$ coincides with $\bsT_\xi$.  In other   words  the    Gauss map of the $C^2$-locus of $Y'_\zeta$ has finite range $\bigl\{ \bsT_\xi;\;\;\xi\in\Xi_\zeta\,\bigr\}$.  This shows  that the support of  the slice $\lan S, \Psi, \zeta\ran $ is contained in a finite number of $(n-m)$-dimensional planes.  The slice $\lan S,\Psi,\zeta\ran$   is a $(n-m)$-dimensional cycle  with compact support. The constancy theorem shows  that it must be trivial.   This  implies that $\dim Y_\zeta' <(n-m)$. This contradicts (c2) and thus completes the proof of Theorem \ref{th: fu11}.

\end{proof}

\begin{remark} If we  denote by $d\xi \in \Omega^n(\bsV\dual)$ the Euclidean  volume form on $\bsV\dual$, we see that for a  \emph{subanalytic}   current $S\in \eC_n(\bsV\dual\times \bsV)$ the condition (v) of Theorem \ref{th: fu11} is equivalent  to the condition
\begin{equation*}
(p^*d\xi) \cap  S =0
\tag{$\mathrm{v}'$}
\label{tag: v}
\end{equation*}
employed in \cite[Thm. 1.1]{Fu1}. Indeed, clearly (v) $\Rightarrow$ (\ref{tag: v}).  The implication (\ref{tag: v})  $\Rightarrow$ (v) follows from   Sard's theorem and the  fact that outside a  subanalytic subset  of dimension $\leq (n-1)$ the support $|S|$ can be identified with a real analytic manifold. \qed
\label{rem: v}
\end{remark}

\noindent {\bf Proof  of  Theorem \ref{th:  fu1}.}  Suppose $N_0, N_1\in\eC_{n-1}(\Sigma\dual\times\bsV)$ are two  subanalytic cycles satisfying the condition (i),(ii), (iii), (iv) of the theorem.   Then the   subanalytic cycles   $\pi_* N_i\in \eC_{n-1}(\bsV)$, $i=0,1$, have compact support. Since the reduced homology of $\bsV$ is trivial we deduce  from \cite{Hardt2} that there exist subanalytic currents   $D_i\in \eC_n(\bsV)$ such that
\[
\pa D_i= \pi_*(N_i),\;\;i=0,1.
\]
The constancy theorem (Theorem \ref{th: const}) shows that  the currents $D_i$ are uniquely determined      by the above equality.

 Let $z: \bsV\ra \bsV\dual\times \bsV=T^*\bsV$ denote the zero section of $T^*\bsV$, i.e., $z(x)=(0,x)$,  $\forall x\in\bsV$. Consider the  rescaling map
\[
\mu:[0,\infty)\times  \Sigma\dual \times \bsV\ra \bsV\dual \times \bsV,\;\;  (\lambda, \xi, x)\mapsto (\lambda\xi, x),
\]
and, as in \cite[Prop. 4.8]{Ber}, we form the currents
\begin{equation}
S_i:=\mu_*\bigl(\,[0,\infty)\times N_i\,\bigr) +z_* (D_i),\;\;i=0,1.
\label{eq: cone}
\end{equation}
As explained in \cite[Prop. 4.8]{Ber},  the current $S=S_1-S_0$ satisfies  the assumptions  (i)--(iv) of  Theorem \ref{th: fu11}  and  also the condition (\ref{tag: v}).   Using  the Remark \ref{rem: v} and Theorem \ref{th: fu11} we conclude that  $S=0$.\qed

\begin{remark} Let us observe that   the condition (v) in Theorem  \ref{th: fu11} is  equivalent to the condition that the slices $S_\xi$ are trivial for almost all $\xi$ in $\bsV\dual$.\qed
\label{rem: ber-unique}
\end{remark}

\section{Existence}
\label{s: 4}
\setcounter{equation}{0}

We want to show that Theorem \ref{th: fu1} $\Rightarrow$  Theorem  \ref{th: fu2}. We start by describing  a simple  well known class of compact subanalytic sets  that    admit  normal cycles.

\begin{ex}[Normal cycles of regular domains]   Suppose $X$ is a compact subanalytic  domain in $\bsV$ with $C^2$-boundary. Consider the oriented  Gauss map $\bgamma:\pa X\ra \Sigma\dual$
that  associates to each $x\in \pa X$ the unit covector $\bgamma(x)$ which is dual  to  the  unit \emph{outer}   normal  vector at $x$.    We get an embedding
\[
\Gamma: \pa X\ra \Sigma\dual   \times \bsV,\;\; x\mapsto (\bgamma(x), x),
\]
whose image coincides with the graph of the Gauss map.  Denote by    $[\pa X]$  the integration current defined by $\pa X$ equipped with the  induced\footnote{We use the outer-normal-first convention to orient the boundary.} boundary orientation. Then the cycle  $\Gamma_*[\pa X]$ supported by the graph of the Gauss map  is the normal  cycle of $X$.

Indeed, it   is obviously a subanalytic cycle,  and $\supp \Gamma_*[X]$ is compact  since $X$ is compact.   The Legendrian condition is simply a rephrasing of the fact that for any $x\in \pa X$  the covector $\bgamma(x)$ is conormal to $T_x\pa X$.

To verify (iv) we  first observe that
\[
i_X(\xi,x)=0,\;\;\forall \xi\in \Sigma\dual,\;\;x\in X\setminus \pa X.
\]
  For $x\in \pa X$ denote by $\mathbf{I\!I}_x$ the second fundamental form of  $\pa X$ at $x$.
The equality (\ref{eq: morse-sl}) is a consequence of the following  facts.  Fix  a regular value $\xi$ of $\bgamma$, and a point  $x\in \bgamma^{-1}(\xi)$. Then,

\begin{itemize}

\item [(f1)]  the local  degree of  $\bgamma$ at $x$ is equal to the  sign of the determinant of $-\mathbf{I\!I}_x$;

\item[(f2)] $i(\xi,x)=(-1)^{\nu_+}$, where $\nu_+$ is the number of positive  eigenvalues  of $\mathbf{I\!I}_x$.

\end{itemize}

 For a proof of (f1) we refer to \cite[\S 9.2.3]{N0}.   To prove  (f2) we can assume that  $x=0$, and near $x$ the hypersurface $\pa X$ is   the graph of a quadratic form
 \[
 x^n = q(x^1,\dotsc, x^{n-1})= \sum_{i=1}^{\nu_+} (x^i)^2-\sum_{j=\nu_+ +1}^{n-1} (x^j)^2,
 \]
 while the interior of  $X$ is,  locally,  the region below the graph.   Then $\mathbf{I\!I}_x= q$ and   (f2) now follows  from  standard facts of Morse theory. \qed
\label{ex: reg}
\end{ex}

Suppose now that $X$ is a compact subanalytic set.   Fix an integer $p >2 n=2\dim\bsV$. Then there exists a  subanalytic $C^p$-function $f:\bsV\ra\bR$  such that $f^{-1}(0)=X$; see \cite[Thm. C.11]{MvD}. Set $g:=f^2$ so that $g$ is $C^p$, nonnegative,  subanalytic and $g^{-1}(0)=X$.  The following result  should be obvious.

\begin{lemma}  Fix $R>0$  sufficiently large so  that $X$ is contained in the open ball  $B_R(0)$.   Then there exists $c=c_R>0$ such that,  for any $t\in (0,c_R)$ the level set $g^{-1}(t)$ does not intersect   the sphere $\pa B_R(0)$. \qed
\label{lemm: proper}
\end{lemma}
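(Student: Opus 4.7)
The plan is to use a straightforward compactness argument based on the fact that $g$ is continuous and vanishes exactly on $X$.

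First I would observe that $\partial B_R(0)$ is compact and, by our choice of $R$, disjoint from $X = g^{-1}(0)$. Since $g$ is nonnegative and $g^{-1}(0) = X$, this means $g(y) > 0$ for every $y \in \partial B_R(0)$. By continuity of $g$ and compactness of the sphere, the restriction $g|_{\partial B_R(0)}$ attains its minimum, and this minimum is a strictly positive number; call it $c_R := \min_{y \in \partial B_R(0)} g(y) > 0$.

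Next I would verify the conclusion directly. Suppose for contradiction that some $t \in (0, c_R)$ admits a point $y \in g^{-1}(t) \cap \partial B_R(0)$. Then on one hand $g(y) = t < c_R$, while on the other hand $g(y) \geq c_R$ by definition of $c_R$. This contradiction shows $g^{-1}(t) \cap \partial B_R(0) = \emptyset$ for every such $t$.

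There is essentially no obstacle here; the statement is an elementary continuity-plus-compactness fact, using only that $g$ is continuous (which follows from $g$ being $C^p$) and that $X$ is the zero set of $g$ sitting strictly inside $B_R(0)$. The subanalytic hypothesis plays no role for this particular lemma; it will be used in subsequent arguments where $c_R$ is further constrained so that level sets behave well with respect to a Morse-theoretic stratification.
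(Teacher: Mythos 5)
Your proof is correct and is exactly the elementary continuity-plus-compactness argument the lemma calls for; the paper in fact omits a proof entirely (it precedes the lemma with ``The following result should be obvious'' and marks the statement with a terminal box), so there is no alternative route to compare against. Your concluding observation—that the subanalytic hypothesis is irrelevant here and only enters when $c_R$ is shrunk to $c_0$ to avoid critical values of $g_R$—is also accurate.
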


Denote by $g_R$ the restriction of $g$ to the ball $B_R(0)$ in the above lemma.  The set $\Delta_g$  of critical values   of  $g_R$ is a subanalytic  $0$-dimensional subanalytic  subset of $\bR$  and thus it consists of a finite number of points.

Fix $c_0\in (0,c_R)$   so that the interval $(0,c_0)$ consists only of regular values  of $g_R$.  Then for any $\ve\in (0,c_0)$   the set
\[
X_\ve:=\bigl\{ x\in B_R(0);\;\;g(x)\leq \ve\bigr\}
\]
is a compact subanalytic domain  with $C^2$-boundary.   Therefore it has a  normal cycle $\bsN^\ve=\bsN_{X_\ve}$. The collection $(X_{\ve})_{\ve\in(0,c_0)}$ is an increasing  subanalytic family of compact subanalytic sets such that
\[
X=\bigcap_{0<\ve<c_0} X_\ve.
\]
The collection $(\supp \bsN^\ve)_{\ve\in (0, c_0)}$ is a definable collection of compact, subanalytic manifolds of class $C^p$.  We deduce that their volumes are bounded from above. This shows that the  family of currents $(\bsN^\ve)_{\ve\in (0,c_0)}$ is bounded in the mass norm. The compactness theorem  \cite[Thm. 4.2.17]{Feder} implies  that  there exists a subsequence $\ve_\nu\searrow 0$ such that the  currents    $\bsN^{\ve_\nu}$ converge in  the flat metric to a  integral Legendrian cycle   $\bsN \in \Omega_{n-1}(\Sigma\dual\times\bsV)$. 

To prove that $\bsN$ is a \emph{subanalytic} current  it suffices to show that its support is contained in a subanalytic set of dimension $\leq (n-1)$. To see this we consider the subanalytic set
\[
\eZ=\Bigl\{ (\xi, x)\in\Sigma\dual\times \bsV;\;\exists 0<\ve\leq \frac{c_0}{2}:\;\;x\in\pa X_\ve,\;\;\xi=\bgamma(x)\,\Bigr\}=\bigcup_{0<\ve\leq \frac{c_0}{2}} \supp\bsN_\ve.
\]
Then $\dim \eZ=n$ and $ \cl(\eZ)\setminus \eZ$ is a subanalytic set of dimension $<n$ containing $\supp \bsN$.

  We  want to show that $\bsN$ satisfies  (\ref{eq: morse-sl}). Since
\[
\lan \vfi dV_{\Sigma\dual}, \bsN\ran =\lim_{\nu\ra \infty}\lan\vfi dV_{\Sigma\dual}, \bsN^{\ve_\nu}\ran,\;\;\forall \vfi\in C_0^\infty(\Sigma\dual\times\bsV),
\]
it suffices to show that $\forall \vfi\in C_0^\infty(\Sigma\dual\times\bsV)$ we have
\begin{equation}
\lim_{\ve\searrow 0}\int_{\Sigma\dual}\Bigl(\sum_x \vfi(\xi,x) i_{X_\ve}(\xi,x)\Bigr) dV_{\Sigma\dual}=    \int_{\Sigma\dual}\Bigl(\sum_x \vfi(\xi,x) i_{X}(\xi,x)\Bigr) dV_{\Sigma\dual}.
\label{eq: morse-sl1}
\end{equation}
To do this we will need to use some of the topological facts and terminology presented in Appendix \ref{s: a}.

Observe that $\supp \bsN^\ve$ is a  $C^p$-manifold of dimension $n-1$,  and its projection onto $\bsV$ is $\pa X_\ve$. This shows that the  definable  set
\[
\eN=\bigcup_{0<\ve<c_0}\supp \bsN^\ve.
\]
has dimension $n$. Hence,  there exists a subanalytic set $\Sigma_0\dual  \subset \Sigma\dual$ such that $\dim(\Sigma\dual\setminus\Sigma_0\dual) <\dim\Sigma\dual$ and for any $\xi\in \Sigma_0\dual$ the set $p^{-1}(\xi)\cap \eN$ has  dimension $1$. This implies that for any $\xi\in\Sigma_0\dual$ there exists $c_\xi>0$ such that, and any $\ve\in (0,c_\xi)$  we have
\[
\dim p^{-1}(\xi)\cap\supp \bsN^\ve\leq 0.
\]
Thus, for $\xi\in\Sigma_0\dual$ and  $\ve\in (0, c_\xi)$ the slice $\bsN_\xi^\ve$ is well defined.  Let us point out  that the set of homological critical points of $-\xi: X_\ve\ra \bR$ is  contained in the projection on $\bsV$ of $\supp \bsN_\xi^\ve$.   In particular, if $\xi\in\Sigma_0\dual$, and $0<\ve <c_\xi$ ,   this set of homological critical points is finite.

As explained in Appendix \ref{s: a}, there exists a subanalytic  subset  $\Sigma_1\dual\subset\Sigma\dual$ such that $\dim(\Sigma\dual\setminus \Sigma_1\dual) <\dim\Sigma\dual$ and  for any $\xi\in\Sigma_1\dual$  the set of homological critical points of $-\xi: X\ra \bR$  is finite. Set $\Sigma_*=\Sigma_0\dual\cap \Sigma_1\dual$ so that $\dim \Sigma\dual\setminus \Sigma_* <\dim\Sigma\dual$. We have the following fundamental equality
\begin{equation}
\lim_{\ve\ra 0} \sum_{x\in X} \vfi(\xi,x)\Bigl(\, i_{X_\ve}(\xi,x)-i_{X}(\xi,x)\,\Bigr)=0,\;\;\vfi\in C_0^\infty(\Sigma\dual\times\bsV),\;\;\xi\in  \Sigma_*.
\label{eq: limit}
\end{equation}
The choice $\xi\in\Sigma_*$ guarantees that for any $0<\ve <c_\xi$ the above sum consists of finitely many terms.

Let us first  show that (\ref{eq: limit}) implies (\ref{eq: morse-sl1}).    We will achieve this using the Lebesgue dominated convergence theorem so it suffices to show that there exists  a  constant $C>0$ so that for any $\ve>0$  there exists  a    subanalytic set $\Delta_\ve \subset \Sigma\dual$ such  that $\dim \Delta_\ve <\dim \Sigma\dual$ and
\begin{equation}
\sum_{x\in X_\ve} |i_{X_\ve}(\xi, x)| < C,\;\;\forall  \xi\in \Sigma\dual \setminus \Delta_\ve.
\label{eq: bound}
\end{equation}
For   every $\ve\in (0,c_0)$ we denote by $\Delta_\ve\subset \Sigma\dual$ the set
\[
\Delta_\ve :=\bigl\{\xi\in \Sigma\dual;\;\; p^{-1}(\xi) \;\;\mbox{does \underline{not} intersect  $\supp\bsN^\ve$ transversally}\,\bigr\}.
\]
We deduce that $\dim \Delta_\ve <\dim\Sigma\dual$.      Note that for any $\xi\in\Sigma\dual\setminus S_\ve$ the set $p^{-1}(\xi)\cap \supp \bsN^\ve$ coincides with  $\supp \bsN_\xi^\ve$ and
\begin{equation}
|i_{X_\ve}(\xi,\ve)|=1,\;\;\forall  (\xi, x)\in\supp \bsN_\xi^\ve.
\label{eq:  index}
\end{equation}
The set
\[
\eX :=\bigl\{ (\xi,\ve)\in\Sigma\dual\times (0,c_0);\;\; \xi\in \Sigma\dual\setminus S_\ve\,\bigr\}.
\]
is definable, and for any $(\xi,\ve)\in\eX)$ the    set $\supp\bsN_\xi^\ve$ is finite.    We have thus obtained a  definable collection of finite sets $\bigl(\, \supp N_\xi^\ve\,\bigr)_{(\xi,\ve)\in\eX}$  and  we  conclude that there exists an integer  $K>0$ such that
\begin{equation}
\# \supp\bsN_\xi^\ve <K,\;\;\forall(\xi,\ve)\in\eX.
\label{eq: card}
\end{equation}
Let  $\ve\in(0,c_0)$. Then, for any $\xi\in \Sigma\dual\setminus \Delta_\ve$ we have
\[
\sum_{x\in X}|i_{X_\ve}(\xi,x)|=\sum_{x,\;\;(\xi,x)\in\supp \bsN_\xi^\ve} |i_{X_\ve}(\xi,x)|\stackrel{(\ref{eq: index})}{=} \# \supp\bsN_\xi^\ve \stackrel{(\ref{eq: card})}{\leq} K.
\]
This proves (\ref{eq: bound}).

Let us  prove (\ref{eq: limit}). Fix $\vfi\in C_0^\infty(\Sigma\dual\times\bsV)$ and $\xi\in \Sigma_*$. Define
\[
\Cr_\ve:=\bigl\{ x\in \bsV;\;\; (\xi,x)\in \supp \bsN_\xi^\ve\,\bigr\}.
\]
Using the terminology in Appendix \ref{s: a}, we see that $\Cr_\ve$ is the set of numerically critical points of the function $-\xi$ on $X_\ve$. Consider the $\bR_{\mathrm{an}}$-definable set
\[
\widetilde{\Cr}:=\bigl\{ (t,x)\in [0,c_\xi)\times \bsV;\;\;x\in\Cr_t\,\bigr\},
\]
and denote by $\tau$ the natural projection $\widetilde{\Cr}\ra [0, c_\xi)$. Then there exists  $\delta>0$  such that over the interval  $(0,\delta)$ the map $\tau$ is a locally trivial fibration. Its fibers will consists  of the same number $\ell$ of points.  Thus, we can find  $\bR_{\mathrm{an}}$-definable  continuous  maps $x_1,\dotsc, x_\ell: (0,\delta)\ra \bsV$ such that
\[
\Cr_t=\bigl\{ x_1(t),\dotsc, x_\ell(t)\,\bigr\},\;\;x_i(t)\neq x_j(t),\;\;\forall t\in (0,\delta),\;\;i\neq j.
\]
Set $x_i(0):=\lim_{t\searrow 0}  x_i(t)$. The above limits exist since the functions $x_i(t)$ are definable and $X$ is compact. For $x\in X$ we  define
\[
I_x:=\{ i;\;\; x_i(0)=x\}\subset \{1,\dotsc, \ell\}.
\]
The equality (\ref{eq: limit}) is then a consequence of the following result.

\begin{lemma}
\begin{equation}
i_X(\xi,x)= \lim_{t\ra 0}\sum_{i\in I_x} i_{X_t}(\xi, x_i(t)\,),\;\;\forall x\in X.
\label{eq: limit1}
\end{equation}
In particular, if  $I_x=\emptyset$, then the above limit is zero.
\label{lemma: limit}
\end{lemma}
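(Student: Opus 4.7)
The plan is to interpret both sides of (\ref{eq: limit1}) as Euler characteristics of subanalytic sets inside a fixed small ball about $x$ and to pass to the limit $t\to 0$ using definable triviality.

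Set $c:=\xi(x)$ and pick $r>0$ small and generic (subanalytic Sard) so that (i) the local conic structure theorem makes $B_r(x)\cap X$ and $B_r(x)\cap X\cap\{\xi\ge c\}$ contractible (cones on their links with apex $x$), and $i_X(\xi,x)=1-\chi(B_r(x)\cap X\cap\{\xi>c\})$; (ii) $\partial B_r(x)$ is transverse to $\partial X_t$ for every small $t>0$ and the critical values of $\xi$ on $\partial B_r(x)\cap\partial X_t$ stay bounded away from $c$ uniformly in small $t$. Since $x_i(t)\to x$ for $i\in I_x$ and $x_j(0)\ne x$ for $j\notin I_x$, pick $\tau>0$ such that for every $t\in(0,\tau)$ the points $\{x_i(t):i\in I_x\}$ lie in $B_{r/2}(x)$ while no other $x_j(t)$ meets $\overline{B_r(x)}$. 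Fix $\delta\in(0,r\|\xi\|/2)$ small and shrink $\tau$ further so that $|\xi(x_i(t))-c|<\delta$ for all such $i$ and $t$, and set $Z_t:=\overline{B_r(x)}\cap X_t$.

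The critical points of $-\xi$ on $Z_t$ with $\xi$-value in the strip $(c-\delta,c+\delta)$ are then (a) the max-type interior-boundary points $\{x_i(t):i\in I_x\}$ with $\bgamma(x_i(t))=\xi$, whose local Morse jump equals $i_{X_t}(\xi,x_i(t))$ by Example~\ref{ex: reg}, and possibly (b) some min-type boundary critical points with $\bgamma=-\xi$, whose local Morse jump is zero by a direct local check (the strict upper-level sets of $\xi$ near such a point are a half-ball on one side of the critical value and a cap on the other, both contractible). Applying stratified Morse theory for $-\xi$ on the subanalytic domain-with-corners $Z_t$ in the tame form of Appendix~\ref{s: a} then yields
\[
\chi\bigl(Z_t\cap\{\xi>c-\delta\}\bigr)-\chi\bigl(Z_t\cap\{\xi>c+\delta\}\bigr)\;=\;\sum_{i\in I_x} i_{X_t}(\xi,x_i(t)).
\]
Passing to $t\to 0$ with $\delta$ fixed, the subanalytic families $(Z_t\cap\{\xi>c\pm\delta\})_{t\in[0,\tau)}$ are definable, so Hardt's triviality theorem near $t=0$ shows that their Euler characteristics stabilize for small $t$ and equal those of the $t=0$ fibers $B_r(x)\cap X\cap\{\xi>c\pm\delta\}$. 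For $\delta$ small and generic, $B_r(x)\cap X\cap\{\xi>c-\delta\}$ is homotopy equivalent to the contractible $B_r(x)\cap X\cap\{\xi\ge c\}$ (Euler characteristic $1$), while $B_r(x)\cap X\cap\{\xi>c+\delta\}$ is homotopy equivalent to $B_r(x)\cap X\cap\{\xi>c\}$ (Euler characteristic $1-i_X(\xi,x)$). Substituting gives
\[
\lim_{t\to 0}\sum_{i\in I_x}i_{X_t}(\xi,x_i(t))\;=\;1-(1-i_X(\xi,x))\;=\;i_X(\xi,x),
\]
which is (\ref{eq: limit1}).

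The main obstacle is the Morse identity above: enumerating the critical points of $-\xi$ on the definable domain-with-corners $Z_t$, eliminating all sphere and corner critical values from the strip by the Sard-type choice of $r$, verifying the local vanishing of the min-type jumps, and invoking the non-characteristic deformation lemma of Appendix~\ref{s: a} to equate the total $\chi$-jump across the strip with the sum of local jumps. Once this Morse identity is in hand, definable triviality of the approximating family and the local conic structure of $X$ at $x$ take care of the rest.
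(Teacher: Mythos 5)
Your overall strategy is the same as the paper's: localize near $x$ in a small ball, reinterpret both sides of (\ref{eq: limit1}) as differences of Euler characteristics of superlevel sets of $\xi$, and pass to the limit $t\to 0$ by definable triviality. The implementation differs, however, and the paper's is appreciably leaner. The paper works with the \emph{open} ball $B_r(x)$, setting $X_r=X\cap B_r(x)$ and $X_{t,r}=X_t\cap B_r(x)$, and fixes a \emph{single} cutoff $c'<c$ below the critical value with $[c',c)$ free of critical values of $\xi|_{X_r}$; combining (\ref{eq: top-drop}) and (\ref{eq: vary}) and the contractibility of $X_\rho$ it gets $i_X(\xi,x)=\chi(X_r)-\chi(X_r\cap\{\xi\geq c'\})$ and the analogous formula for $X_{t,r}$, so only the two limits $\chi(X_{t,r})\to\chi(X_r)$ and $\chi(X_{t,r}\cap\{\xi\geq c'\})\to\chi(X_r\cap\{\xi\geq c'\})$ are needed. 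Because $X_{t,r}$ is an \emph{open} subset of $X_t$, the only numerical critical points that enter the Morse accounting are $\{x_i(t):i\in I_x\}$; there is no sphere boundary or corner to worry about. You instead close up the ball, work on the domain-with-corners $Z_t=\overline{B_r(x)}\cap X_t$, and compute the $\chi$-jump across a two-sided strip $(c-\delta,c+\delta)$; this forces you to classify and dismiss the extra critical points of $-\xi$ coming from $\pa B_r(x)$ and from the corner $\pa B_r(x)\cap\pa X_t$. That is correct in spirit and the local vanishing of the index at the "min-type" boundary points is indeed a straightforward check, but the clause "the critical values of $\xi$ on $\pa B_r(x)\cap\pa X_t$ stay bounded away from $c$ uniformly in small $t$" is a genuine extra claim that you assert rather than prove; it does hold by $o$-minimality (the family $\{(t,r,v):\ v\ \text{critical value of}\ \xi|_{\pa B_r(x)\cap\pa X_t}\}$ is definable and one can choose $r$ generically), but it is exactly the kind of bookkeeping the paper's choice of an open ball and a cutoff below $c$ is designed to sidestep. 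In short: same idea, but the paper anchors the computation at $\chi(X_r)=1$ and avoids the sphere-boundary Morse theory altogether, while your version trades that simplicity for an explicit but terser treatment of the corner strata.
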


\begin{proof} Set $c=\xi(x)$.    For $r>0$  and $t\in (0,\delta)$ we set
\[
X_r:= X\cap B_r(x),\;\;X_{t,r}:= X_t\cap B_r(x).
\]
 Choose   $r>0$  and $t_0>0$ sufficiently small such that  the following hold.

\begin{itemize}

\item   The  topological type of  $X_\rho\cap\{\xi>c\}$ is independent  of $\rho\in (0,r]$.

\item  The set  $X_\rho$ is contractible  for any $\rho\leq r$, and the set $X_r\setminus \{x\}$ contains no homological critical points of $-\xi: X\ra \bR$.

\item  $x_i(t)\in B_r(x)$ and $x_j(t)\not\in B_r(x)$   $\forall i\in I_x$, $j\not\in I_x$,  $t\in (0,t_0)$.

\end{itemize}

Fix  $c'<c$ so that the interval $[c', c)$ contains no  critical values of $\xi|_{X_r}$.  We deduce from (\ref{eq: top-drop}) and (\ref{eq: vary})
\[
i_X(\xi,x)=\chi(X_r) -\chi\bigl(X_r\cap\{\xi>c'\}\,\bigr)= \chi(X_r) -\chi\bigl(X_r\cap\{\xi\geq c'\}\,\bigr) .
\]
The only (numerically) critical points of $-\xi$ on $X_{t,r}$ are $\{x_i(t);\;\;i\in I_x\}$.         Choose $t_1<t_0$ such that
\[
\lan \xi,  x_i(t)\ran  > c',\;\;\forall i\in I_x,\;\; 0<t<t_1.
\]
Using  again (\ref{eq: top-drop}) and (\ref{eq: vary}) for the function $-\xi$ on on $X_{t,r}$, $t<t_1$,  we deduce
\[
\sum_{i\in I_x} i_{X_r}(\xi,x)=\chi(X_{t,r})- \chi\bigl(X_{t,r}\cap\{\xi>c'\}\,\bigr)=\chi(X_{t,r})- \chi\bigl(X_{t,r}\cap\{\xi\geq c'\}\,\bigr).
\]
The equality (\ref{eq: limit1})      is obtained by observing that
\[
\lim_{t\searrow 0} \chi(X_{t,r})= \chi(X_r) \;\;\mbox{and} \;\; \lim_{t\searrow 0}\chi\bigl(X_{t,r}\cap\{\xi\geq c'\}\,\bigr)= \chi\bigl(X_r\cap\{\xi\geq c'\}\,\bigr) .
\]
\end{proof}

The  equality (\ref{eq: limit}) is immediate.  Set $\Cr_0:=\{x\in \bsV;\;\;I_x\neq \emptyset\}$.  Then \[
\sum_{x\in \bsV} \vfi(\xi, x) i_X(\xi,x)- \sum_{y\in \bsV}  \vfi(\xi, y) i_{X_\ve}(\xi,y)=\sum_{x\in \Cr_0} \vfi(\xi, x) i_X(\xi,x)- \sum_{y\in \Cr_\ve}  \vfi(\xi, y) i_{X_\ve}(\xi,y)
\]
\[
=\sum_{x\in \Cr_0} \Bigl(\vfi(\xi, x) i_X(\xi,x) -\sum_{i\in I_x} \vfi(\xi, x_i(\ve)) i_{X_\ve}(\xi, x_i(\ve)\,)\Bigr).
\]
The equality (\ref{eq: limit1}) implies that the last term goes to zero  as $\ve\searrow 0$. This proves (\ref{eq: limit}) and (\ref{eq: morse-sl1}).

 From Theorem \ref{th: fu1} we deduce that $\bsN$ must be the normal cycle of $X$ and that $\bsN_{X_\ve}$ converges in the flat metric to $\bsN_X$ as $\ve\searrow 0$.\qed

\section{Normal cycles  of constructible functions}
\label{s: 5}
\setcounter{equation}{0}

Let us explain how the results proved so far lead to an alternate approach to the existence  statement in \cite[Thm. 6.1]{Ber}.  We need to introduce some  terminology.

A  function $ f:\bsV\ra \bZ$ is called constructible if   its range is finite and for any $n\in\bZ$ the  level set $f^{-1}(n)$ is subanalytic.   If we let $I_S$ denote the characteristic  function of a set  $S\subset \bsV$, then  for any constructible function $f$ we can write
\[
f=\sum_{n\in \bZ} nI_{f^{-1}(n)}.
\]
We denote by $\bsC(\bsV)$ the Abelian group of  constructible functions and by  $\bsC_0(\bsV)$ the Abelian group of constructible functions with compact support.   The triangulability theorem \cite[Thm. 8.2.9]{Dr} implies that this group is generated by the characteristic  functions  of  compact subanalytic sets.   

Observe that   for any compact subanalytic sets  $X, Y\subset\bsV$   and any $x\in\bsV$ we have
\[
i_{X\cup Y}(\xi,x) =i_X(X,x)+ i_Y(\xi,x)-i_{X\cap Y}(\xi,x),
\]
for almost all $\xi\in \Sigma\dual$. In view of the uniqueness theorem this implies that
\[
\bsN^{X\cup Y}=\bsN^X+\bsN^Y-\bsN^{X\cap Y}.
\]
From   Groemer's extension theorem \cite[Cor. 2.2.2]{KlR} we deduce  that the correspondence 
\[
\mbox{compact subanalytic subset $X$}\ra \mbox{normal cycle $\bsN_X$}
\]
extends to a group morphism
\[
\bsC_0(\bsV)\ni f\mapsto\bsN^f\in \eC_{n-1}(\Sigma\dual \times \bsV).
\]
The normal cycle $\bsN^f$ of a compactly supported  constructible function $f$ is a compactly supported, subanalytic legendrian cycle.

In the remainder of this  section we want to  compare this construction of the normal cycle of a constructible  function to the one proposed in \cite{Ber}.  

We will need to use the $o$-minimal  Euler characteristic function $\chio$  as defined  in  $o$-minimal   topology; see  Appendix \ref{s: b} and \cite{Dr}.     We will denote by $\chit$ the usual topological Euler characteristic.   These two notions coincide on compact   subanalytic sets, but they  could be quite different on  non-compact ones.   For example, if $B^k$ is an \emph{open} $k$-dimensional ball, then
\[
\chio(B^k)=(-1)^k,\;\; \chit(B^k)=1.
\]
More generally, if $X$ is locally compact, then $\chio(X)$ can be identified with the Euler characteristic of the Borel-Moore  homology of $X$. 

Each  of these two notions of Euler characteristic has its own advantages.  The  $o$-minimal  Euler characteristic $\chio$ is \emph{fully additive} additive, i.e., for \underline{\emph{any}} subanalytic sets $X$ and $Y$ we have
\begin{equation}
\chio(X\cup Y)=\chio(X) +\chio(Y)-\chio(X\cap Y).
\label{eq:  add}
\end{equation}
On the other hand, it is not a  homotopy invariant as its topological cousin $\chit$.

The additivity condition (\ref{eq: add})  and the Groemer extension theorem implies that $\chio$ defines a    linear map $\bsC(\bsV)\ra \bZ$, called the \emph{integral with respect to the Euler characteristic}, and denoted by  $\int d\chi$.

 Suppose  $X$ is a compact subanalytic set, and $\xi\in \Sigma\dual$ is   such that the induced function $-\xi: X\ra \bR$ is  a nice  in the sense of  Appendix \ref{s: a} to which we refer  for notations.  Using  (\ref{eq: top-drop}) we deduce that  for every $t\in \bR$, and every sufficiently small $\ve>0$  we have
 \[
 \sum_{x\in X_{\xi=t}}i_X(\xi,x)  = \chit(X_{\xi\geq t})-\chit(X_{\xi>t})= \chit(X_{\xi\geq t})-\chit(X_{\xi>t-\ve})
 \]
 \[
 = \chit(X_{\xi\geq t})-\chit(X_{\xi\geq t-\ve})= \chio(X_{\xi\geq t})-\chio(X_{\xi\geq t-\ve})
 \]
 \[
 \stackrel{(\ref{eq: add})}{=} \chio(X_{t-\ve< \xi\leq t}) \stackrel{(\ref{eq: add})}{=}  \chio(X_{t-\ve< \xi <t}) +\chio(X_{\xi=t}).
 \]
 For $\ve>0$ sufficiently small  the induced map $\xi:  X_{t-\ve< \xi <t}\ra (t-\ve,t)$ is a locally trivial fibration. We denote its  fiber  by $X_{\xi=t-0}$. Since the  $o$-minimal Euler characteristic of an open interval is $-1$  we deduce
 \[
  \chio(X_{t-\ve< \xi <t}) = \chio\bigl(\, X_{\xi=t-0}\times (t-\ve,t)\,\bigr)= - \chio( X_{\xi=t-0}).
  \]
  We conclude that
  \begin{equation}
 \sum_{x\in X_{\xi=t}}i_X(\xi,x)  = \chio(X_{\xi=t})-\chio(X_{\xi=t-0}).
 \label{eq: jump}
 \end{equation}
 Following  \cite{Ber},  we  associate to  any constructible  function $f\in\eC_0(\bsV)$ and  any $\xi\in \bsV\dual$  the integral $0$-dimensional current    $J_f(\xi)\in\Omega_0(\bR)$ given by
\[
 J_f(\xi):=\sum_{t\in\bR} m_f(\xi, t) \delta_t,
 \]
 where  $m_f(\xi, t)$ is the integer
 \[
 m_f(\xi, t):=\lim_{\ve\searrow 0}\Bigl(\,\int_{\xi=t} f d\chi-\int_{\xi=t-\ve} f d\chi\Bigr)= \lim_{\ve\searrow 0}\int f\cdot \bigl(\,I_{\xi=t}-I_{\xi=t-\ve}\,\bigr) d\chi.
 \]
 Let us point out that when  $f$ is the  characteristic function of a bounded subanalytic set $X$, then
 \[
 m_f(x,t)= \lim_{\ve\searrow 0}\bigr(\, \chio(X_{\xi=t})-\chio(X_{\xi=t-\ve})\,\bigr).
 \]
 If we denote by $\bsI_0(\bR)$ the Abelian group of  integral $0$-dimensional  currents on $\bR$, then    we can organize the above  construction  as  a  \emph{jump} map
 \[
 J_f:\bsV\dual\ra \bsI_0(\bR).
 \]
 This map     is homogeneous  and, as proved\footnote{Our sign conventions are a bit different from the ones in \cite{Ber}. More precisely  if $h_f$ is the support map as defined in \cite{Ber}, then $h_f(\xi)=J_f(-\xi)$. The ``culprit'' for this   discrepancy  is the outer-normal convention used  in Example \ref{ex: reg}.}  in \cite{Ber}, it is also Lipschitz with respect  to   the flat metric on $\bsI_0(\bR)$.  It is also   constructible in the  sense that the function $m_f:\bsV\dual\times \bR\ra \bZ$ is constructible.     Let us observe that  if $f =I_X$, where $X$ compact subanalytic set  with normal cycle  $\bsN^X$, then   the equality  (\ref{eq: jump}) implies that for almost any $\xi\in\Sigma\dual$ we have
 \begin{equation}
 J_{I_X}(\xi)=\beta_*\bsN_\xi^X,
 \label{eq: jump1}
 \end{equation}
 where $\beta :\bsV\dual\times\bsV\ra \bR$ is the bilinear map $\beta(\xi,x)=\xi(x)$.

 We denote by $\eX_\bsV$ the   Abelian group of    constructible, homogeneous, Lipschitz continuous maps $\bsV\dual\ra \bsI_0(\bR)$,  so that   the jump construction   gives  a morphism of Abelian groups 
 \[
 J: \bsC_0(\bsV)\ra \eX_\bsV,\;\;\bsC_0(\bsV)\ni f\mapsto J_f\in \eX_\bsV.
 \]
  According to \cite[Thm. 3.1]{Ber}, this morphism is an \emph{isomorphism} of Abelian groups. The injectivity of $J$ is a consequence of the injectivity of the ``motivic'' Radon transform   of P. Schapira \cite{Scha}. The surjectivity is more subtle, and we refer to \cite{Ber} for details. 
   
  Denote by $\eL_\bsV$ the Abelian group of   subanalytic, conical lagragian cycles $S\in \eC_n(\bsV\dual\times\bsV)$, such that the restriction to $\supp S$  of  the projection $\pi:\bsV\dual\times\bsV \ra \bsV$ is proper.     To  any  compact   subanalytic set $X$ we denote by $\bsS^X\in\eL_\bsV$ the conormal cycle  constructed as in (\ref{eq: cone}),
  \[
  \bsS^X:=\mu_*\bigl(\,[0,\infty)\times \bsN^X\,\bigr) +z_* (X),
  \]
 where $z: \bsV\ra T^*\bsV$ is the zero section, and $\mu: [0,\infty)\times S(T^*\bsV)\ra T^*\bsV$ is the multiplication map
 \[
 [0,\infty)\times S(T^*\bsV)\ni (t,\xi,v)\mapsto (t\xi,v) \in T^*\bsV.
 \]
  The resulting correspondence $X\mapsto\bsS^X$  satisfies the inclusion-exclusion   identity, i.e., 
 \[
 \bsS^{X\cup Y}=\bsS^{X}+\bsS^{Y}-\bsS^{X\cap Y},
 \]
 for any compact subanalytic sets $X, Y$. Invoking the Groemer extension theorem   again we obtain a group morphism $\bsS:\bsC_0(\bsV)\ra \eL_\bsV$. 
 
 Given $S\in \eL_\bsV$, the slice $S_\xi=\lan S,p,\xi\ran$ is defined for almost   every $\xi\in\bsV\dual$.  In \cite[\S 6]{Ber} it is shown that the almost everywhere defined function
 \[
 \bsV\dual\setminus \mbox{negligible set}\ni \xi\mapsto\beta_*S_\xi\in \bsI_0(\bsR)
 \]
 is the restriction    of a function $\si_S\in\eX_\bsV$. We have thus obtained  a morphism of Abelian groups    
 \[
 \si:\eL_\bsV\ra \eX_\bsV,\;\; S\mapsto \si_S,\;\; \si_S(\xi)=\beta_*S_\xi,\;\;\mbox{for almost any $\xi\in\bsV\dual$}.
 \]
 We have the following fundamental result, \cite[p. 403]{Ber}.

  \begin{theorem}  The morphism  $\si$  is injective. More precisely, if $S\in\eL_\bsV$ and 
  \begin{equation*}
\beta_* S_\xi=0,\;\;\mbox{for almost all $\xi\in\bsV\dual$},
\tag{$\mathrm{v}''$}
\label{tag: vp}
\end{equation*}
 then $S=0$.
 \label{th: ber-unique}
 \end{theorem}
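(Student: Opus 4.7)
The plan is to reduce Theorem~\ref{th: ber-unique} to Theorem~\ref{th: fu11} via Remark~\ref{rem: ber-unique}. Any $S\in\eL_\bsV$ already satisfies conditions (i)--(iv) of Theorem~\ref{th: fu11} by the definition of $\eL_\bsV$ (being a Lagrangian conical cycle whose support projects compactly to $\bsV$, which in turn forces $p|_{|S|}$ to be proper, since $|S|$ then lies in a slab $\bsV\dual\times K$ with $K\subset\bsV$ compact). It remains to produce condition (v), which by Remark~\ref{rem: ber-unique} is equivalent to $S_\xi=0$ for almost every $\xi\in\bsV\dual$. If $\dim p(|S|)<n$ this holds trivially, so we may suppose $\dim p(|S|)=n$ and look for a subanalytic exceptional set $\Lambda\subset\bsV\dual$ of dimension $<n$ outside of which $\beta_*S_\xi=0$ forces $S_\xi=0$ pointwise.

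For $\xi$ outside a first subanalytic set $\Lambda_0$ of dimension $<n$, the trivialization theorems of subanalytic topology guarantee that the fiber $p^{-1}(\xi)\cap|S|$ consists of finitely many smooth points $(\xi,x_1(\xi)),\dotsc,(\xi,x_k(\xi))$ depending analytically on $\xi$ in a neighborhood, with integer multiplicities $m_j$, so that $S_\xi=\sum_j m_j\delta_{(\xi,x_j(\xi))}$ and $\beta_*S_\xi=\sum_j m_j\delta_{\xi(x_j(\xi))}$. The key claim is that after enlarging $\Lambda_0$ to a subanalytic $\Lambda$ still of dimension $<n$, the real numbers $\xi(x_1(\xi)),\dotsc,\xi(x_k(\xi))$ are pairwise distinct for every $\xi\notin\Lambda$; granting this, $\beta_*S_\xi=0$ forces every $m_j$ to vanish, giving $S_\xi=0$ almost everywhere, whence Theorem~\ref{th: fu11} yields $S=0$.

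The proof of the claim will combine the Lagrangian and conical hypotheses through the generating function formalism. Over a contractible open $U\subset\bsV\dual\setminus\Lambda_0$ on which $|S|$ presents a smooth branch as a section $x_j:U\to\bsV$, the Lagrangian condition $\omega|_{|S|}=0$ reads as closedness of the $1$-form $\sum_i x_j^i(\xi)\,d\xi_i$ on $U$, so the Poincar\'e lemma produces a $C^2$ generating function $H_j:U\to\bR$ with $x_j=\nabla H_j$. Conicity forces $x_j(\lambda\xi)=x_j(\xi)$ for $\lambda>0$, hence $\nabla H_j$ is homogeneous of degree $0$ and, after adjusting the additive constant, $H_j$ may be taken homogeneous of degree $1$; Euler's identity then gives the crucial formula $\xi(x_j(\xi))=\xi\cdot\nabla H_j(\xi)=H_j(\xi)$. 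If two distinct branches $x_j\neq x_{j'}$ over $U$ produced $H_j\equiv H_{j'}$ on some nonempty open subset, then $\nabla H_j=\nabla H_{j'}$ would force $x_j=x_{j'}$ there, a contradiction. Thus the locus $\{\xi\in U:H_j(\xi)=H_{j'}(\xi)\}$ is subanalytic of dimension $<n$, and adjoining the finitely many such loci arising from a subanalytic stratification of $|S|$ to $\Lambda_0$ produces the desired $\Lambda$. The main obstacle is the uniform global management of branches across $\bsV\dual$, so as to assemble a \emph{single} subanalytic exceptional set of the correct dimension; this is precisely where the finiteness and definability properties of the subanalytic category are indispensable.
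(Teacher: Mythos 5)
Your proof is correct and follows the same overall architecture as the paper: reduce to Theorem~\ref{th: fu11} via Remark~\ref{rem: ber-unique}, then show that $\beta_*S_\xi=0$ for a.e.\ $\xi$ forces $S_\xi=0$ for a.e.\ $\xi$ by proving that over a dense conical open set the values $\lan\xi, x_j(\xi)\ran$ along distinct branches of $|S|$ are pairwise distinct. Where you diverge from the paper is in \emph{how} you prove this rigidity of the $\beta$-values. You introduce a local generating function $H_j$ with $\nabla H_j=x_j$ (Poincar\'e lemma applied to the closed form $\sum_i x_j^i\,d\xi_i$ furnished by $\omega\cap S=0$), observe that conicity lets you normalize $H_j$ to be homogeneous of degree~1, and invoke Euler's identity to get $\lan\xi,x_j(\xi)\ran=H_j(\xi)$, whence equality of $\beta$-values on an open set forces equality of gradients, i.e.\ of branches. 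The paper instead works directly with $\alpha\cap S=0$: it argues by contradiction, uses the definable selection theorem to produce a ``matching'' branch $g_0$ locally constant on chambers, perturbs $\xi_0$ in the direction $u^\dag$ dual to $u=f_0(\xi_0)-g_0(\xi_0)$, differentiates the identity $\lan\xi_t,f_0(\xi_t)-g_0(\xi_t)\ran=0$, and kills the derivative terms with the $\alpha$-condition to land on $|u|^2=0$. These are two packagings of the identical first-order computation (your $dH_j=\lan d\xi,x_j\ran$ is precisely the paper's $\alpha(\dot p)=0$), but your generating-function/Euler presentation is arguably cleaner in that it sidesteps the contradiction-plus-definable-selection machinery and exhibits the distinctness of $\beta$-values directly, letting the vanishing of all multiplicities follow from linear independence of distinct Dirac masses.

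Three small points deserve to be made explicit. First, for Euler's identity to apply you need the local open set $U$ to be \emph{conical} (a cone over a contractible open subset of $\Sigma\dual$), not merely contractible; the paper arranges this automatically by triangulating $\Sigma\dual$ and passing to the cones $\Xi_\si$ over the open top faces, and you should say the same. Second, to pass from ``$H_j-H_{j'}$ is not identically zero on any open set'' to ``$\{H_j=H_{j'}\}$ has dimension $<n$'' you need $H_j$ to be subanalytic; this is true but not automatic from the Poincar\'e lemma---it holds because after the degree-$1$ normalization $H_j(\xi)=\lan\xi,x_j(\xi)\ran$, which is subanalytic since $x_j$ is. Third, the ``uniform global management of branches'' you flag at the end is exactly what the paper handles by choosing a definable triangulation $\eK$ of $\Sigma\dual$ once and for all, so that the branch combinatorics is constant over each open top face and the union of all exceptional loci is a finite union of sets of dimension $<n$; it would be worth writing that sentence rather than just acknowledging the issue.
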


 \begin{proof}   For the reader's convenience we decided to include a proof of this result.   What follows is a slightly different   incarnation of the strategy employed in \cite[p.403]{Ber}. We will show that (\ref{tag: vp}) implies that  $S_\xi=0$ for almost any $\xi$.  We then conclude using Remark \ref{rem: ber-unique} and Theorem \ref{th: fu11}.

 First of all, let us observe that since $S$ is conical and $\omega\cap S =0$, then $\alpha\cap S=0$, where we recall that $\alpha\in \Omega^1(T^*\bsV)$ is the canonical $1$-form and $\omega=-d\alpha$ is the canonical symplectic form. 
 
 For any subset $\si\subset \Sigma\dual$ we set
\[
\Xi_\si:=\bigl\{ \xi\in\bsV\dual\setminus 0;\;\;\frac{1}{|\xi|}\xi\in \si\,\bigr\}.
\]
Since $S$ is conical  and subanalytic we can find a definable triangulation $\eK$  of $\Sigma\dual$ such that for any top dimensional (open) face  $\si$ of  there exists

\begin{itemize}

\item  a  finite collection $\eF_\si=\{f_1,\dotsc, f_{\nu(\si)}\}$ of subanalytic,   $C^1$-maps $f: \Xi_\si\ra \bsV$ and

\item  a multiplicity map $m_\si: \eF_\si\ra  \bZ$ 

\end{itemize}
with the following properties

\begin{enumerate}

\item   For any $\xi\in\Xi_\si$, the map $\eF_\si\ni f\mapsto f(\xi)\in \bsV$ is injective.

\item  Any $f\in \eF_\si$ is homogeneous of degree $0$.

\item For any $\xi\in \Xi_\si$ the slice $S_\xi$ is well defined and it is described by
\[
S_\xi=\sum_{f\in \eF_\si} m_\si(f) \delta_{(\xi, f(\xi))}.
\]
\end{enumerate}
Then
\begin{equation}
\beta_* S_\xi=\sum_{f\in\eF_\si} m_\si(f)\delta_{\lan\xi, f(\xi)\ran}=0\in I_0(\bR),\;\;\forall \xi\in \Xi_\si.
\label{eq: zero}
\end{equation}

We fix a top dimensional face $\si$ and we want to prove that $m_\si(f)=0$, $\forall f\in\eF_\si$. We argue by contradiction so we assume that there exists a function $f_0\in \eF_\si$ such  that $m_\si(f_0)\neq 0$.  Then  $\eF_\si'=\eF_\si\setminus \{f_0\}\neq \emptyset$ and  we deduce  from (\ref{eq: zero}) that for any $\xi\in\Xi_\si$ the set
 \[
 G_\xi:=\bigl\{ g\in \eF'_\si;\;\; \lan \xi,g(\xi)\ran =\lan \xi, f_0(\xi)\ran\,\bigr\}
 \]
 is non-empty. The collection $(G_\xi)_{\xi\in\Xi_\si}$  is a definable collection of subsets of the finite set $\eF_\si'$.    From the definable selection theorem we deduce that there exists a definable map
 \[
 \gamma:\Xi_\si\ra \eF'_\si, \;\;\xi\mapsto \gamma_\xi,
 \]
  such that $\gamma_\xi(\xi)\in Z_\xi$, $\forall \xi$. Since $\gamma$ is definable, there exits a definable set $\Delta\subset \Xi_\si$ with the following properties.
  
  \begin{itemize}
  
  \item $\dim\Delta <\dim \Xi_\si=n$.
  
  \item $\Delta $ is closed in $\Xi_\si$.
  
  \item For  any connected component $C$ of $\Xi_\si\setminus \Delta$ the resulting map $\eC\ni \xi\mapsto \gamma_\xi\in \eF_\si'$. is constant. 
  
  \end{itemize}
  
   We will refer to the  connected components of $\Xi_\si\setminus \Delta$ as \emph{chambers}.

 Let  $\xi_0\in\Xi_\si\setminus \Delta$,  and denote  by $\eC_0$ the chamber    containing $\xi_0$. Let $g_0\in \eF_\si'$ be the constant value  of $\gamma\in \eC_0$.      Set
 \[
 u:= f_0(\xi_0)-g_0(\xi_0)\in \bsV,
 \]
  and denote by $u_\dag\in \bsV\dual$ the dual covector.   Since  $\eC_0$ is open we deduce that $\xi_t=\xi_0+ t u_\dag \in \eC_0$
  if $|t|$ is sufficiently small. We deduce
  \[
  \bigl\lan\,\xi_t, f_0(\xi_t)- g_0(\xi_t)\,\bigr\ran=0,\;\;\forall |t|\ll 1.
  \]
 Derivating  the  above  equality  at $t=0$ we deduce
 \begin{equation}
 0=\bigl\lan\, \dxi_0, f_0(\xi_0)-g_0(\xi_0)\,\bigr\ran +\frac{d}{dt}|_{t=0}\bigl\lan\, \xi_0, f_0(\xi_t)\,\bigr\ran-  \frac{d}{dt}|_{t=0}\bigl\lan\, \xi_0, g_0(\xi_t)\,\bigr\ran.
 \label{eq: punch}
 \end{equation}
 The $C^1$-paths $t\mapsto p_t=\xi_t\oplus f_0(\xi_t)\in T^*\bsV$, $t\mapsto q_t=\xi_t\oplus g_0(\xi_t)\in T^*\bsV$ are contained in  the $C^1$-locus of the support of   the current $S$. Since    $\alpha\cap S=0$ we deduce
 \[
 0=\alpha(\dot{p}_0)= \frac{d}{dt}|_{t=0}\lan \xi_0, f_0(\xi_t)\ran,\;\;0=\alpha(\dot{q}_0)=\frac{d}{dt}|_{t=0}\lan \xi_0, g_0(\xi_t)\ran.
 \]
Using this in (\ref{eq: punch}), and observing that $\dot{\xi}_0=u_\dag$, we conclude that $0=\lan u_\dag, u\ran= |u|^2$. Hence $f_0(\xi_0)=g(\xi_0)$. This contradicts the injectivity assumption (i).

 \end{proof}

 Theorem   \ref{th: fu1}  and (\ref{eq: jump1})   imply that we have a commutative diagram
 \[
 \begin{diagram}
 \node{\bsC_0(\bsV)}\arrow{se,b}{J}\arrow[2]{e,t}{\bsS}\node[2]{\eL_\bsV}\arrow{sw,b}{\si}\\
 \node[2]{\bsI_0(\bR)}
 \end{diagram}
 \]
 Since $J$ is an isomorphism, we deduce that the morphism $\si$ must also be surjective.    
 
 \begin{remark} (a)  The  surjectivity of $\si$  is also claimed in \cite[\S 6]{Ber}. However, the proof  is incorrect due to a glitch  in the proof of \cite[Lemma 6.4]{Ber}.  That lemma is an existence  result  having to do with a   certain cycle $S$  with support contained in the closure of a cell $\Gamma$,   and such that $\supp \pa S\subset \pa \Gamma := \cl (\Gamma)\setminus \Gamma$. Loosely,     the lemma states that the relative cycle  determined by $S$ in the homology of the pair $(\cl (\Gamma), \pa \Gamma)$ is trivial.   The  bordism proving the vanishing of this homology class   is a cone on $S$  constructed using  a certain homotopy.  That homotopy is defined only on $\Gamma$, not on $\cl(\Gamma)$, so the homotopy formula  stated as  in \cite[Thm. 4.3]{Ber} cannot be applied.  
 
  (b) From the above  discussion it follows that the morphism $\bsS$ is also bijective. Its  inverse  can  be described in terms of the \emph{local Euler obstruction}, \cite[\S IX.7]{KaSch}, \cite{Scha1}.\qed
  \label{rem: glitch}
 \end{remark}

 \section{Approximation}
\label{s: app}
\setcounter{equation}{0}

We have now developed enough technology to provide an alternate proof  to   the  convergence theorem in \cite{Fu3}.

\begin{theorem}  Suppose  that $(X_k)_{k\geq 0}$ and  $X$ are compact subanalytic  subsets of $\bsV$ satisfying the following  conditions.

\begin{enumerate}

\item[(a)] There exists $R>0$ such that 
\[
X, X_k\subset \bigl\{ x\in \bsV;\;\;|x|\leq R\,\bigr\},\;\;\forall k.
\]
 
\item[(b)]  The sequence of currents $\bsN^k:= \bsN^{X_k}\in \eC_{n-1}(\Sigma\dual\times\bsV)$ is bounded in the mass norm.

\item[(c)] For any $c\in\bR$ and almost any $\xi\in\Sigma\dual$ we have
\[
\lim_{k\ra \infty} \chi\bigl( X_k\cap\{\xi\geq c\}\,\bigr)=  \chi\bigl( X\cap\{\xi\geq c\}\,\bigr).
\]
\end{enumerate}
Then the sequence of currents $\bsN^{X_k}$ converges in the flat metric to $\bsN=\bsN^X$.
\label{th: approx}
\end{theorem}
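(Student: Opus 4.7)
The plan is to adapt the strategy of the existence proof in Section \ref{s: 4} to this abstract setting: extract a subsequential flat limit of $(\bsN^{X_k})$ via Federer compactness, verify that it satisfies the hypotheses of the uniqueness Theorem \ref{th: fu1}, and conclude it must equal $\bsN^X$. Since any subsequence of $(\bsN^{X_k})$ would then admit a further subsequence converging to the same limit $\bsN^X$, the full sequence must converge. By (a) and (b), the currents $\bsN^{X_k}$ are all supported in the fixed compact set $\Sigma\dual\times\overline{B_R(0)}$ and uniformly bounded in mass, so Federer's compactness theorem yields, after passing to a subsequence, $\bsN^{X_{k_j}}\to\bsN$ in the flat metric for some integral $(n-1)$-current $\bsN$. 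The conditions $\partial \bsN=0$, compactness of $\supp \bsN$, and the Legendrian identity $\alpha\wedge \bsN=0$ all pass to flat limits, since each encodes the vanishing of a pairing against a smooth form.

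The central step is to verify condition (iv) of Theorem \ref{th: fu1}, which by Remark \ref{rem: normc}(a) amounts to showing the slice equality $\bsN_\xi=\bsN^X_\xi$ for almost every $\xi\in\Sigma\dual$. First, by Federer slicing, $\bsN^{X_{k_j}}_\xi\to\bsN_\xi$ in flat norm for almost every $\xi$. Pushing forward by $\beta(\xi,x):=\xi(x)$ and invoking (\ref{eq: jump1}), this gives $\beta_*\bsN_\xi=\lim_j J_{I_{X_{k_j}}}(\xi)$ in $\bsI_0(\bR)$ for a.e.\ $\xi$. Set $F_k(t):=\chi(X_k\cap\{\xi\geq t\})$ and $F(t):=\chi(X\cap\{\xi\geq t\})$. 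Hypothesis (c) gives $F_k(t)\to F(t)$ pointwise in $t$, while (b) bounds the total variation of $F_k$ uniformly in $k$ (each $F_k$ is a step function whose jumps are the Morse indices $i_{X_k}(\xi,\cdot)$, whose total absolute value is controlled by the mass of the slice $\bsN^{X_k}_\xi$). All $F_k$ are supported in the bounded interval $[-R,R]$ outside which they are constant, so Lebesgue dominated convergence gives $F_k\to F$ in $L^1_{\mathrm{loc}}(\bR)$; the distributional derivatives converge accordingly, so $J_{I_{X_k}}(\xi)\to J_{I_X}(\xi)$ weakly as Radon measures, and hence $\beta_*\bsN_\xi=J_{I_X}(\xi)=\beta_*\bsN^X_\xi$ for a.e.\ $\xi$.

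The final and most substantive step is to upgrade this equality of $\beta$-pushforwards of slices to the full slice equality itself. Following Section \ref{s: 5}, I would form the difference $T:=\bsN-\bsN^X$, a compactly supported Legendrian cycle with $\beta_*T_\xi=0$ for a.e.\ $\xi\in\Sigma\dual$; pass to its conical Lagrangian $S_T:=\mu_*\bigl([0,\infty)\times T\bigr)+z_*(D)$ with $\partial D=\pi_*T$, as in (\ref{eq: cone}); observe that $\beta_*(S_T)_\xi=0$ for a.e.\ $\xi\in\bsV\dual$; and apply Bernig's uniqueness Theorem \ref{th: ber-unique} to conclude $S_T=0$, whence $T=0$. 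The main obstacle is that Theorem \ref{th: ber-unique} is formulated for \emph{subanalytic} conical Lagrangian cycles, whereas the flat limit $\bsN$ need not inherit subanalyticity from the sequence $\bsN^{X_k}$. Overcoming this would require showing that $\supp\bsN$ is contained in a subanalytic set of dimension $\leq n-1$ — presumably through a finer geometric analysis of the limiting Morse-theoretic data supplied by (c) — so that $\bsN$, and hence $T$ and $S_T$, belong to the correct subanalytic class.
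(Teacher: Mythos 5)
Your overall strategy coincides with the paper's: extract a subsequential flat limit by compactness, pass the Legendrian/cycle/support conditions to the limit, reduce via the slicing lemma to showing that $\beta_*\bsN^k_\xi$ and $\beta_*\bsN^X_\xi$ have the same weak limit for a.e.\ $\xi$, identify $\beta_*\bsN^k_\xi$ with the jump measure of the step function $F_k(t)=\chi(X_k\cap\{\xi\geq t\})$, exploit hypothesis (c), and finally invoke Theorem \ref{th: ber-unique} after coning as in (\ref{eq: cone}). This is exactly the route the paper takes.

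There is, however, one concrete error in the middle of your argument. You assert that hypothesis (b) ``bounds the total variation of $F_k$ uniformly in $k$.'' It does not. What (b) gives, via the coarea inequality for slices, is only the \emph{integrated} bound $\int_{\Sigma\dual}\mathrm{mass}(\beta_*\bsN^k_\xi)\,d\xi\leq \mathrm{mass}(\bsN^k)\leq \mu$, uniformly in $k$. For a \emph{fixed} $\xi$ this gives no control on $\sup_k\mathrm{TV}(F_k)=\sup_k\mathrm{mass}(\beta_*\bsN^k_\xi)$; a uniform $L^1(\Sigma\dual)$ bound on $h_k(\xi):=\mathrm{mass}(\beta_*\bsN^k_\xi)$ does not, by itself, force $\sup_k h_k(\xi)<\infty$ for a.e.\ $\xi$. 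Hence the dominated-convergence step $F_k\to F$ in $L^1_{\mathrm{loc}}(\bR)$ is unsupported as written, and the subsequent claim that $J_{I_{X_k}}(\xi)\to J_{I_X}(\xi)$ weakly does not follow. The paper's proof is more careful here: it proves convergence directly against indicator functions $I_{[c,c')}$ using (\ref{eq: vary}) and hypothesis (c), introduces $h_\infty(\xi):=\sup_k h_k(\xi)$, and then upgrades to weak convergence by step-function approximation of smooth test functions --- though you should note that even the paper's deduction of $h_\infty(\xi)<\infty$ a.e.\ from the Markov inequality deserves scrutiny, since the bound $\mathrm{vol}\{h_k>t\}\leq\mu/t$ is pointwise in $k$ and does not immediately control the supremum.

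Finally, the concern you raise in your last paragraph is legitimate and is in fact glossed over in the paper's own proof of Theorem \ref{th: approx}. The paper simply asserts that the flat limit $\bsN'$ is a subanalytic cycle, but unlike in Section \ref{s: 4} the sets $X_k$ here form an arbitrary sequence rather than a definable family, so the union-of-supports device used there is unavailable. Since Theorem \ref{th: ber-unique} is stated for subanalytic conical Lagrangian cycles, some additional argument is needed to justify its applicability to the limit $\bsN'$; you are right to flag this as a gap rather than sweep it under the rug.
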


\begin{proof}  It suffices to show that any subsequence of $(\bsN^k)$  contains  a sub-subsequence that converges in the  flat metric to $\bsN$. To keep the notations at bay, we  will denote by $(\bsN^k)$ the various intervening subsequences   of $(\bsN^k)$.

The conditions (a) and  (b) imply via the compactness theorem for integral currents that $(\bsN^k)$ contains a subsequence convergent in the flat metric to  a subanalytic cycle $\bsN'$.   To prove  that $\bsN'=\bsN$  we will invoke  Theorem \ref{th: ber-unique}  so that we have to show that 
\begin{equation}
\beta_*\bsN_\xi=\beta_*\bsN_\xi',\;\;\mbox{for almost all $\xi\in\Sigma\dual$}.
\label{eq: morse-sl2}
\end{equation}
Denote by $\|-\|_\flat$ the flat norm.  Using the slicing lemma \cite[Lemma 8.1.16]{KP}  we deduce that 
\begin{equation}
\lim_{k\ra \infty}\|\bsN^k_\xi-\bsN'_\xi\|_\flat =0,\;\;\mbox{for almost all $\xi\in\Sigma\dual$}.
\label{eq:  slice-converg}
\end{equation}
Thus, to  prove  (\ref{eq: morse-sl2})  is suffices to show that $\beta_*\bsN_\xi^k$ converges weakly to $\beta_*\bsN_\xi$ for almost any $\xi\in \Sigma\dual$.  

We think of the  integral currents  $\beta_*\bsN_\xi^k$ and  $\beta_*\bsN_\xi$ as  (signed) Borel measures on the real  axis concentrated  on  finite sets. If $I_{[c,c')}$ denotes the characteristic function of the interval $[c,c')$, $c<c'$,  then we deduce from (\ref{eq: vary}) that for almost any $\xi \in \Sigma\dual$  and any $c<c'$ we have
\[
\lan I_{[c,c')}, \beta_*\bsN_\xi\ran=\chi\bigl(\, X\cap\{c'\leq \xi< c\}\,\bigr),\;\;\lan I_{[c,c')}, \beta_*\bsN^k_\xi\ran=\chi\bigl(\, X_k\cap\{\xi\geq c\}\,\bigr),\;\;\forall k\geq 0.
\]
Using (c) and  (\ref{eq: vary}) we deduce 
\[
\lim_{k\ra \infty}\lan I_{[c,\infty)}, \beta_*\bsN^k_\xi\ran= \lan I_{[c,\infty)}, \beta_*\bsN_\xi\ran,\;\;\forall c,\;\;\mbox{for almost any $\xi\in\Sigma\dual$}.
\]
Since $I_{[c,c')}=I_{[c,\infty)}-I_{[c',\infty)}$  we conclude
\begin{equation}
\lim_{k\ra \infty}\lan I_{[c,c')}, \beta_*\bsN^k_\xi\ran= \lan I_{[c,c')}, \beta_*\bsN_\xi\ran,\;\;\forall c<c',\;\;\mbox{for almost any $\xi\in\Sigma\dual$}.
\label{eq: conv-half}
\end{equation}
Let us show that the above equality implies that $\beta_*\bsN^k_\xi$ converges weakly to $\beta_*\bsN_\xi$.

For $k\geq 0$ we define
\[
h_k:\Sigma\dual\ra [0,\infty],\;\; h_k(\xi)=\sum_{x\in X} |i_{X_k}(\xi,x)|.
\]
Observe that if the slice $\bsN^k_\xi$ is defined, then  $h_k(\xi)=\mbox{mass}(\beta_*\bsN_\xi^k)$. If $\mu:= \sup_k \mbox{mass}\,(\bsN^k)$, then
\[
\int_{\Sigma\dual} h_k(\xi)\,|d\xi| \leq \mu,\;\;\forall k.
\]
Hence
\[
{\rm vol}\,\bigl\{ \xi\in\Sigma\dual;\;\;h_k(\xi)>t\,\bigr\}\leq \frac{\mu}{t}.
\]
We set $h_\infty(\xi):=\sup_k h_k(\xi)$ and we deduce from the above inequality that for almost any $\xi\in\Sigma\dual$ we have $h_\infty(\xi)<\infty$.

Let $\vfi \in C_0^\infty(\bR)$.  Denote by $L$ the Lipschitz constant of $\vfi$ and fix  a very small $\ve>0$. Define
\[
\vfi_\ve:\bR\ra \bR,\;\;\vfi(t)=\vfi(n\ve),\;\; \mbox{if} \;\;n\ve\leq t <(n+1)\ve,\;\;n\in\bZ.
\]
Using (\ref{eq: conv-half}) we deduce
\[
\lim_{k\ra \infty}\lan \vfi_\ve, \beta_*\bsN_\xi^k\ran= \lan \vfi_\ve ,\beta_*\bsN_\xi\ran
\]
Choose $k=k(\ve)>0$ such that
\[
\bigl|\, \lan \vfi_\ve, \beta_*\bsN_\xi^k-\beta_*\bsN_\xi\ran\,\bigr|<\ve,\;\;\forall k\geq k(\ve).
\]
Now observe that $\|\vfi-\vfi_\ve\|_\infty \leq L\ve$. We conclude
\[
\bigl|\,\lan \vfi_\ve-\vfi, \beta_*\bsN_\xi\ran\,\bigr|\leq \mbox{mass}\,(\beta_*\bsN_\xi) L\ve,\;\;\bigl|\,\lan \vfi_\ve-\vfi, \beta_*\bsN_\xi^k\ran\,\bigr|\leq   h_\infty(\xi) L\ve,\;\;\forall k.
\]
Hence, 
\[
\bigl|\,\lan \vfi,\beta_*\bsN_\xi^k-\beta_*\bsN_\xi\ran\,\bigr|\leq L\bigl(\, h_\infty(\xi)+\mbox{mass}\,(\beta_*\bsN_\xi)+1\,\bigr)\ve,\;\;\forall k\geq k(\ve).
\]
We deduce that
\[
\lim_{k\ra \infty}\lan \vfi, \beta_*\bsN_\xi^k\ran= \lan \vfi ,\beta_*\bsN_\xi\ran,\;\;\forall\vfi\in C_0^\infty(\bR),
\]
i.e.,  $\beta_*\bsN_\xi^k$ converges weakly to $\beta_*\bsN_\xi$ for almost any $\xi\in \Sigma\dual$.  \end{proof}

\appendix

\section{A fast introduction to $o$-minimal topology}
\label{s: b}

Since the subject of tame  geometry is  not     very familiar to
many  geometers we  devote this section to a   brief  introduction
to this topic.   Unavoidably, we will  have to omit many interesting
details and  contributions, but  we refer to \cite{Co, MvD, Dr}
for  more systematic presentations. For every set $X$ we will denote
by $\eP(X)$ the   collection of all subsets of $X$

An  \emph{$\bR$-structure}\footnote{This is a highly condensed and
special version of  the traditional definition of structure.     The
model theoretic definition  allows for  ordered fields, other than
$\bR$, such as extensions of $\bR$ by ``infinitesimals''. This can
come in handy even if  one is interested  only in the  field $\bR$.}
is a collection $\eS=\bigl\{\, \eS^n\,\bigr\}_{n\geq 1}$,
$\eS^n\subset \eP(\bR^n)$, with the following properties.

\begin{description}

\item[${\bf E}_1.$]   $\eS^n$ contains all the real algebraic subvarieties of $\bR^n$, i.e., the zero sets of  finite collections of polynomial in $n$ real variables.

\item[${\bf E}_2.$]  For every   linear map $L:\bR^n\ra \bR$, the half-plane $\{\vec{x}\in \bR^n;\;\;L(x)\geq 0\}$  belongs to $\eS^n$.

\item[${\bf P}_1.$] For every $n\geq 1$, the family $\eS^n$ is closed under  boolean operations, $\cup$, $\cap$ and complement.

\item[${\bf P}_2.$]  If $A\in \eS^m$, and $B\in \eS^n$, then $A\times B\in \eS^{m+n}$.

\item[${\bf P}_3.$]  If $A\in \eS^m$, and $T:\bR^m\ra \bR^n$ is an affine map, then $T(A)\in \eS^n$.

\end{description}

\begin{ex}[Semialgebraic sets]    Denote by $\bR_{alg}$ the collection of real semialgebraic sets.  Thus,  $A\in \bR^n_{alg}$ if and only if  $A$  is a finite  union of sets,  each of which is described by finitely many polynomial equalities and inequalities. The celebrated Tarski-Seidenberg theorem states that $\eS_{alg}$ is a structure.\qed
\end{ex}

Let $\eS$ be an $\bR$-structure. Then a set that belongs to one of
the  $\eS^n$-s is called   $\eS$-\emph{definable}. If $A, B$ are
$\eS$-definable, then a function $f: A\ra B$ is called
$\eS$-\emph{definable} if its graph $ \Gamma_f :=\bigl\{\, (a,b)\in A\times B;\;\;b=f(a)\,\bigr\}$ is $\eS$-definable.

Given a collection $\eA=(\eA_n)_{n\geq
1}$, $\eA_n\subset\eP(\bR^n)$, we can form a new structure
$\eS(\eA)$, which is the smallest structure containing   $\eS$ and
the sets in $\eA_n$. We say that $\eS(\eA)$ is obtained  from  $\eS$
by \emph{adjoining the collection $\eA$}.

\begin{definition} An $\bR$-structure is called \emph{$o$-minimal} (order minimal) or \emph{tame} if  it satisfies the property

\begin{description}
\item[{\bf T}]    Any set $A\in  \eS^1$ is a \emph{finite} union of open intervals $(a,b)$, $-\infty \leq a <b\leq \infty$, and singletons $\{r\}$. \qed
\end{description}

\end{definition}

\begin{ex} (a) (Tarski-Seidenberg)  The collection  $\bR_{alg}$ of real semialgebraic sets  is a tame structure.

\noindent (b) (A. Gabrielov, R. Hardt, H. Hironaka, \cite{Gab, Hardt2, Hiro})   A \emph{restricted} real
analytic function is a  function $f:\bR^n\ra \bR$ with the property
that there exists a real analytic function $\tilde{f}$ defined in an
open  neighborhood $U$ of the cube $C_n:=[-1,1]^n$ such that
\[
f(x)=\begin{cases}
\tilde{f}(x) & x\in C_n\\
0 & x\in \bR^n\setminus C_n.
\end{cases}
\]
we denote by $\bR_{an}$ the structure obtained from $\eS_{alg}$ by
adjoining the  graphs of all the restricted real analytic functions.
Then $\bR_{an}$ is a tame structure, and the $\bR_{an}$-definable
sets are called \emph{(globally) subanalytic sets}. \qed
\end{ex}

The  definable sets  and function of a tame structure have  rather remarkable \emph{tame} behavior which prohibits  many pathologies.  It is perhaps   instructive to give an example of function which is not definable in any tame structure. For example, the function $x\mapsto \sin x$ is not definable in a tame structure because the intersection of its graph with the horizontal axis is the  countable set $\pi\bZ$  which  violates  the tameness condition ${\bf T}$.

 We will list below some of the nice properties of the sets and function definable  in a  fixed tame structure  $\eS$.  Their proofs can be found in \cite{Co, Dr}. We will interchangeably refer to  sets or functions definable in  a given tame structure $\eS$ as \emph{definable}, \emph{constructible} or  \emph{tame}.

\smallskip

\noindent \ding{227}   (\emph{Piecewise  smoothness of  tame
functions.})  Suppose $A$ is a definable  set, $p$ is a
positive integer, and $f: A\ra \bR$ is a definable function. Then
$A$ can be partitioned into finitely many   definable sets
$S_1,\dotsc, S_k$,     such that each  $S_i$ is a $C^p$-manifold,
and each of the restrictions $f|_{S_i}$ is a $C^p$-function.

\noindent  \ding{227} (\emph{Triangulability.})  For every   compact
definable set $A$, and any finite collection of definable  subsets
$\{S_1,\dotsc, S_k\}$, there exists  a compact simplicial complex
$K$, and a  definable homeomorphism $\Phi: |K|\ra A$ such that  all the sets $\Phi^{-1}(S_i)$ are unions of  relative
interiors of faces of $K$.

\noindent \ding{227} (\emph{Dimension.})  The  dimension of a
definable  set $A\subset \bR^n$ is the supremum over all the
nonnegative integers $d$ such that there exists a $C^1$  submanifold
of $\bR^n$ of dimension $d$ contained in $A$.  Then $\dim A
<\infty$, and $\dim (\cl(A)\setminus A) <\dim A$.

\noindent \ding{227}(\emph{Definable selection.}) Any tame map  $f: A\ra B$ (not necessarily continuous) admits a tame section, i.e.,  a tame map $s: B\ra A$ such that $s(b)\in f^{-1}(b)$, $\forall b\in B$.

\noindent \ding{227} (\emph{Local triviality of tame maps}) If $f: A\ra B$ is a tame continuous map, then there exists a  tame triangulation of $B$ such that over the relative interior of any face the map  $f$ is  a locally trivial fibration.

\noindent \ding{227} (\emph{The $o$-minimal Euler characteristic}) There exists a function $\chio:\eS\ra \bZ$ uniquely characterized  by the following  conditions.

\begin{itemize}

\item $\chio(X\cup Y)=\chio(X)+\chio(Y)-\chio(X\cap Y)$, $\forall X,Y\in \eS$.

\item  If $X\in \eS$ is compact, then $\chio(X)$ is the usual Euler characteristic of $X$.

\end{itemize}

\noindent \ding{227} (\emph{The scissor principle})  Suppose $A$ and $B$ are two tame sets. Then  the following are  equivalent

\begin{itemize}

\item  The sets $A$ and $B$ have the same  $o$-minimal Euler characteristic and dimension.

\item There exists  a tame bijection $f: A\ra B$. (The map $f$ \emph{need  not be continuous}.)

\end{itemize}

\noindent \ding{227} (\emph{Crofton formula}, \cite{BK}, \cite[Thm.
2.10.15, 3.2.26]{Feder}.) Suppose $E$ is an Euclidean space, and
denote by $\Graff^k(E)$ the Grassmannian of affine subspaces of
codimension $k$ in $E$.  Fix an invariant measure $\mu$ on
$\Graff^k(E)$.\footnote{ The measure $\mu$ is unique up to a
multiplicative constant.} Denote by $\eH^k$ the $k$-dimensional
Hausdorff measure. Then there exists a constant $C>0$, depending
only on $\mu$, such that for every compact, $k$-dimensional  tame
subset $S\subset E$ we have
\[
\eH^k(S)= C\int_{\Graff^k(E)} \chi(L\cap S) d\mu(L).
\]

\noindent \ding{227} (\emph{Finite volume.}) Any compact
$k$-dimensional tame set has finite $k$-dimensional Hausdorff
measure. 

\noindent \ding{227} (\emph{Uniform volume bounds.}) If $f:A\ra B$ is a proper, continuous  definable map such that all the fibers  have dimensions $\leq k$, then   there exists $C>0$ such that
\[
 \eH^k\bigl(\, f^{-1}(b)\,\bigr) <C,\;\;\forall b\in B.
\]
\qed

\section{Subanalytic  currents}
\label{s: c}
\setcounter{equation}{0}

In this appendix  we gather  without proofs a few facts  about  the subanalytic currents introduced by R. Hardt in \cite{Hardt, Hardt2}.  Our  terminology    concerning currents closely  follows that of Federer \cite{Feder} (see also the more accessible \cite{KP, Mor}).  However, we changed some  notations to better resemble notations used in  algebraic topology.

Suppose $X$ is a $C^2$, oriented Riemann manifold of dimension $n$.
We denote by $\Omega_k(X)$ the space of $k$-dimensional currents in
$X$, i.e., the topological dual space of the space
$\Omega^k_{cpt}(X)$ of smooth, compactly supported $k$-forms on $X$.
We will denote by
\[
\lan\bullet,\bullet\ran: \Omega^k_{cpt}(X)\times \Omega_k(X)\ra \bR
\]
the natural pairing.  The boundary of a current $T\in \Omega_k(X)$
is  the $(k-1)$-current defined via the Stokes formula
\[
\lan \alpha, \pa T\ran :=\lan d\alpha, T\ran,\;\;\forall \alpha\in
\Omega^{k-1}_{cpt}(X).
\]
For every  $\alpha\in \Omega^k (X)$,  $T\in \Omega_m(X)$,  $k\leq m$
define $\alpha \cap T\in \Omega_{m-k}(X)$ by
\[
\lan\beta , \alpha \cap T  \ran =\lan \alpha\wedge \beta, T
\ran,\;\;\forall \beta\in \Omega^{n-m+k}_{cpt}(X).
\]
We have
\[
\lan \beta, \pa (\alpha \cap T)\ran = \lan \,d\beta, (\alpha\cap
T),\ran = \lan \alpha\wedge d\beta, T\ran
\]
\[
=(-1)^k \lan d(\alpha\wedge \beta) -d\alpha\wedge \beta, T\ran =
(-1)^k \lan \beta, \alpha \cap\pa T\ran +(-1)^{k+1} \lan \beta,
d\alpha \cap T\ran
\]
which yields the \emph{homotopy formula}
\begin{equation}
\pa (\alpha\cap T)= (-1)^{\deg \alpha} \bigl(\, \alpha \cap \pa
T-(d\alpha) \cap T\,\bigr). \label{eq: homotop}
\end{equation}
We have the  following important result\, \cite[\S 4.1.7]{Feder}.
\begin{theorem}[Constancy theorem]   Suppose $S\in \Omega_k(\bsV\dual\times \bsV)$ is a $k$-dimensional \emph{cycle}  whose support is contained in a $k$-dimensional  affine subspace $\bsU\subset \bsV\dual\times\bsV$.    Then there exists  an orientation $\mathbf{or}$ on $\bsU$ and an integer $\ell$ such that  $S=\ell[\bsU,\mathbf{or}]$ where $[\bsU,\mathbf{or}]$ is the current of integration along the oriented  affine plane $\bsU$.  In particular, if $\supp S$ is compact, then $S=0$.\qed
\label{th: const}
 \end{theorem}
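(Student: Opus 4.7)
The plan is to reduce the statement to the classical distributional fact that a $k$-current on $\bR^k$ with vanishing distributional exterior derivative is a constant multiple of Lebesgue measure, and then to invoke subanalyticity to upgrade that constant to an integer.

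First, after an affine change of coordinates I would assume the affine subspace $\bsU$ passes through the origin, fix an orthogonal decomposition $\bsV\dual\times\bsV\cong\bsU\oplus\bsU^\perp$, and choose linear coordinates $y^1,\dotsc,y^k$ on $\bsU$ and $z^1,\dotsc,z^{n-k}$ on $\bsU^\perp$. Pick an orientation $\mathbf{or}$ on $\bsU$ so that $\omega_{\bsU}:=dy^1\wedge\cdots\wedge dy^k$ is the induced volume form. With this setup, the $k$-current $[\bsU,\mathbf{or}]$ is the current of integration against $\omega_{\bsU}$.

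Next, I would invoke the structure theorem for top-dimensional currents supported on a subspace (a consequence of the Riesz representation theorem for currents, as in \cite[\S 4.1.5--7]{Feder}): any $k$-current $S$ on $\bsV\dual\times\bsV$ with $\supp S\subset\bsU$ is of the form $S=T\cdot[\bsU,\mathbf{or}]$ for a uniquely determined distribution $T$ on $\bsU$, where explicitly $\langle\omega,S\rangle=\langle T,f\rangle$ with $f(y)$ the $\omega_{\bsU}$-component of the pullback of $\omega$ to $\bsU$. The hypothesis $\pa S=0$ unpacks, via the defining identity $\langle d\beta,S\rangle=0$ applied to test $(k-1)$-forms $\beta$, into the statement that every distributional partial $\pa T/\pa y^j$ vanishes on $\bsU$. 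Connectedness of the affine subspace then yields $T=c$ for a constant $c\in\bR$, so $S=c\,[\bsU,\mathbf{or}]$.

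To promote $c$ to an integer, I would use that in the subanalytic framework of the paper the current $S$ lies in $\eC_k$, the group generated by integration currents over oriented $k$-dimensional subanalytic submanifolds; each generator pairs with a smooth $k$-form to an integral of that form, and testing $c\,[\bsU,\mathbf{or}]$ against a compactly supported approximation of $\omega_{\bsU}$ whose integral equals one forces $c$ to be a $\bZ$-linear combination of such test integrals, hence an integer. Finally, since $[\bsU,\mathbf{or}]$ has non-compact support (an entire affine subspace of $\bsV\dual\times\bsV$), the assumption that $\supp S$ is compact forces $c=0$, giving the second assertion. The main technical obstacle is the structure-theorem step producing the distribution $T$; once that representation is in hand, the remaining steps are essentially linear-algebraic manipulations and a standard application of distribution theory.
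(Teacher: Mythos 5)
The paper does not actually prove this theorem; it is stated with a \qed and the citation \cite[\S 4.1.7]{Feder}. So you are not comparing against an existing proof, but supplying one from scratch.

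The central gap is in your structure-theorem step. It is \emph{not} true that every $k$-current $S$ on $\bsV\dual\times\bsV$ with $\supp S\subset\bsU$ has the form $S=T\cdot[\bsU,\mathbf{or}]$ for a distribution $T$ on $\bsU$. A current supported on a $k$-plane may still depend on normal derivatives of the test form. In fact, this produces a genuine counterexample to the theorem as literally stated: take $n=2$, $\bsU$ the $x$-axis, $g\in C_c^\infty(\bR)$ nonzero, and define the $1$-current
\[
\bigl\langle\, a_1\,dx+a_2\,dy\,,\,S\,\bigr\rangle:=\int_{\bR}\Bigl(\,\partial_y a_1(x,0)\,g(x)+a_2(x,0)\,g'(x)\,\Bigr)\,dx.
\]
Then $\supp S$ is compact and contained in $\bsU$, and a one-line integration by parts in $x$ gives $\pa S=0$, yet $S\neq 0$ (test it against $y\,\phi(x)\,dx$). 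The conclusion fails because $S$ is not a \emph{locally flat} current. Federer's constancy theorem \cite[4.1.7]{Feder} requires this flatness hypothesis, and it is exactly the hypothesis that kills the normal-derivative contributions; the paper's statement is silent about it because it is only ever applied to subanalytic currents (which are integral, hence locally flat). Your proof, as written, proceeds for arbitrary $S\in\Omega_k$ and invokes a representation that does not hold; you must either restrict to $S\in\eC_k$ or assume $S$ locally flat, and then the representation $S=T\cdot[\bsU,\mathbf{or}]$ with $T$ an $L^1_{\mathrm{loc}}$ function (not merely a distribution) is what Federer's structure theory provides.

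The integer-upgrade step is also not quite right. Pairing $c\,[\bsU,\mathbf{or}]$ against a compactly supported form is a real number (an area), and ``$\bZ$-linear combination of such test integrals'' need not be an integer, so the argument as phrased does not conclude $c\in\bZ$. The clean route is through the multiplicity function: once $S=c\,[\bsU,\mathbf{or}]$ with $S$ integral (or subanalytic), the a.e.\ defined integer-valued density/multiplicity of $S$ equals $|c|$ on $\bsU$, forcing $c\in\bZ$. The remaining steps (coordinates, the derivation that $\pa S=0$ kills $\partial T/\partial y^j$, and the compact-support conclusion) are fine once the representation is secured.
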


We say that a set  $S\subset \bR^n$ is \emph{locally subanalytic}
if for any $p\in \bR^n$ we can find an open ball $B$ centered at
$p$ such that $B\cap S$ is globally subanalytic.

\begin{remark} There is a rather subtle distinction between globally subanalytic and locally subanalytic sets. For example, the graph of the function $y=\sin(x)$ is a locally subanalytic subset of $\bR^2$, but it is not a globally subanalytic  set. Note that a compact, locally subanalytic set is globally subanalytic.\qed
\end{remark}

If $S\subset \bR^n$ is an orientable, locally subanalytic, $C^1$
submanifold of $\bR^n$ of dimension $k$,  then any orientation
$\ori_S$ on $S$ determines a  $k$-dimensional current $[S,\ori_S]$
via the equality
\[
\lan \alpha, [S, \ori_S]\ran:=\int_S \alpha,\;\;\forall \alpha\in
\Omega^k_{cpt}(\bR^n).
\]
The integral in the right-hand side is well defined because any
bounded, $k$-dimensional  globally subanalytic set has finite
$k$-dimensional Hausdorff  measure.  For any open, locally
subanalytic   subset $U\subset \bR^n$ we  denote by $[S,\ori_S]\cap
U$  the   current $[S\cap U, \ori_S]$.

For any  locally subanalytic subset $X\subset \bR^n$ we denote by
$\eC_k(X)$ the    Abelian subgroup of $\Omega_k(\bR^n)$
generated  by currents of the form $[S,\ori_S]$,  as above, where
$\cl(S)\subset X$. The above operation $[S,\ori_S]\cap U$, $U$ open
subanalytic extends to a morphism of  Abelian groups
\[
\eC_k(X)\ni T\mapsto T\cap U\in\eC_k(X\cap U).
\]
We will refer to the elements of $\eC_k(X)$ as \emph{subanalytic
(integral) $k$-chains} in $X$.

Given compact subanalytic sets $A\subset X\subset \bR^n$ we set
\[
\eZ_k(X,A)=\bigl\{ T\in \eC_k(\bR^n);\;\;\supp T\subset X,\;\;\supp
\pa T\subset A\,\bigr\},
\]
and
\[
\eB_k(X, A)=\bigl\{ \pa T + S;\;\;T\in \eZ_{k+1}(X,A)), \;\;S\in
\eZ_k(A)\,\bigr\}.
\]
We set
\[
\eH_k(X,A):=\eZ_k(X,A)/\eB_k(X,A).
\]
R. Hardt has proved in \cite{Hardt2} that the assignment
\[
(X,A)\longmapsto \eH_\bullet(X,A)
\]
satisfies the Eilenberg-Steenrod   homology axioms with
$\bZ$-coefficients. This implies   that $\eH_\bullet(X,A)$
is naturally isomorphic  with the integral homology  of the pair.

To describe the intersection theory of subanalytic chains  we need
to recall a  fundamental result of R. Hardt, \cite[Theorem
4.3]{Hardt}.  Suppose $E_0, E_1$ are two oriented  real Euclidean
spaces of dimensions $n_0$ and respectively $n_1$,  $f:E_0\ra E_1$
is a real analytic map, and $T\in \eC_{n_0-c}(E_0)$ a subanalytic
current of codimension $c$.  If $y$ is a regular value of $f$,  then
the fiber $f^{-1}(y)$  is  a submanifold  equipped with a natural
coorientation and thus defines a subanalytic  current $[f^{-1}(y)]$
in $E_0$  of codimension $n_1$, i.e., $[f^{-1}(y)]]\in
\eC_{d_0-d_1}(E_0)$. We would like to define the intersection of $T$
and $[f^{-1}(y)]$  as a subanalytic current  $\lan T, f, y\ran\in \eC_{n_0-c-n_1}(E_0)$.     It  turns out that  this
is possibly quite often, even in cases when $y$ is  not a regular
value.

\begin{theorem}[Slicing Theorem]  Let $E_0$, $E_1$, $T$ and $f$ be  as above, denote by $dV_{E_1}$ the Euclidean volume form on $E_1$,   by $\bom_{n_1}$ the volume of the unit ball in $E_1$, and set
\[
\eR_f(T):=\bigl\{ y\in E_1;\;\codim (\supp T )\cap f^{-1}(y) \geq
c+ n_1,\;\codim (\supp \pa T )\cap f^{-1}(y)\geq
c+n_1+1\,\bigr\}.
\]
For  every $\ve>0$ and $y\in E_1$ we define $T\bullet_\ve
f^{-1}(y)\in \Omega_{n_0-c-n_1}(E_0)$ by
\[
\bigl\lan\, \alpha, T\bullet_\ve f^{-1}(y)\,\bigr\ran
:=\frac{1}{\bom_{n_1}\ve^{n_1}}\bigl\lan\, (f^*dV_{E_1})\wedge\alpha ,
T\cap \bigl(\,f^{-1}(B_\ve(y)\,\bigr)\,\bigr\ran,\;\;\forall
\alpha\in \Omega^{n_0-c-n_1}_{cpt}(E_0).
\]
Then  for every $y\in \eR_f(T)$, the currents $T\bullet_\ve
f^{-1}(y)$ converge weakly as $\ve>0$   to a  subanalytic  current
$\lan T, f,y\ran\in \eC_{n_0-c-n_1}(E_0)$ called  the
\emph{$f$-slice} of $T$ over $y$. Moreover,   the map
\[
\eR_f\ni y\mapsto \lan T, f, y\ran\in \eC_{d_0-c-d_1}(\bR^n)
\]
is continuous in the   locally   flat  topology.\qed
\label{th: slice}
\end{theorem}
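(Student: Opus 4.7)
The plan is to establish the Slicing Theorem by reducing to integration currents along oriented $C^2$-subanalytic submanifolds, handling regular values via a Fubini/coarea argument, and exploiting the subanalytic cell decomposition together with the codimension hypothesis built into $\eR_f(T)$ to control the non-regular contributions and to obtain continuity.

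\emph{Step 1 (Reduction to single strata).} By the definition of $\eC_k$ given in Appendix~\ref{s: c}, and by applying subanalytic cell decomposition simultaneously to $\supp T$ and $\supp\pa T$ and to the map $f$ restricted to them, we may write $T=\sum_i n_i [S_i,\ori_i]$ where each $S_i$ is a disjoint oriented $C^2$-subanalytic submanifold of dimension $n_0-c$, $f|_{S_i}$ has constant rank $r_i\le n_1$, and $f(S_i)$ is a $C^2$-subanalytic submanifold of $E_1$ of dimension $r_i$ over which $f|_{S_i}$ is a locally trivial subanalytic fibration. By linearity it suffices to treat each summand.

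\emph{Step 2 (Full-rank strata: classical Federer slicing).} When $r_i=n_1$ (that is, $f|_{S_i}$ is a submersion), for every $y\in f(S_i)$ the preimage $\Phi_y:=f^{-1}(y)\cap S_i$ is a $C^2$-subanalytic submanifold of dimension $n_0-c-n_1$, coorientable by the exact sequence $0\to T\Phi_y\to TS_i\to f^*TE_1\to 0$. A tubular-neighbourhood description of $f^{-1}(B_\ve(y))\cap S_i$ plus the coarea formula applied locally identifies the pairing $\langle(f^*dV_{E_1})\wedge\alpha,[S_i]\cap f^{-1}(B_\ve(y))\rangle$, after division by $\bom_{n_1}\ve^{n_1}$, with the average over $B_\ve(y)$ of $\int_{\Phi_{y'}}\alpha$; continuity of this integral in $y'\in f(S_i)$ yields convergence as $\ve\searrow0$ to $\langle\alpha,[\Phi_y,\mathrm{coor}]\rangle$.

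\emph{Step 3 (Low-rank strata and the role of $\eR_f(T)$).} For strata with $r_i<n_1$, $f(S_i)\subset E_1$ has dimension $<n_1$, so its subanalytic closure has zero Hausdorff $n_1$-measure. The condition $y\in\eR_f(T)$ forces $\codim(\supp T\cap f^{-1}(y))\ge c+n_1$ (and similarly for $\supp\pa T$), which is exactly strong enough to ensure $y$ lies outside $f(S_i)$ for every low-rank stratum $S_i$, except possibly in a subanalytic subset of $E_1$ of positive codimension. Off that exceptional set, $f^{-1}(B_\ve(y))\cap S_i=\emptyset$ for all sufficiently small $\ve$, and the low-rank strata contribute $0$ to the limit. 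For the exceptional $y\in\eR_f(T)$ one needs the uniform volume bounds in the tame category (Appendix~\ref{s: b}) applied to the definable family $\{f^{-1}(B_\ve(y))\cap S_i\}_{\ve,y}$ to see that $\mbox{mass}(T\cap f^{-1}(B_\ve(y))\cap S_i)/\ve^{n_1}\to 0$ whenever the fiber $f^{-1}(y)\cap S_i$ has dimension $\le n_0-c-n_1-1$, which is precisely the codimension assumption in $\eR_f(T)$. Combining Steps~2 and~3 produces the limit current $\langle T,f,y\rangle$ as the sum, over full-rank strata, of the classical smooth slices; this is manifestly a subanalytic current of dimension $n_0-c-n_1$.

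\emph{Step 4 (Flat continuity).} For $y,y'\in\eR_f(T)$, join them by a short arc $\gamma$ in $\eR_f(T)$; the chain $Q_\gamma:=\mu_*\bigl(T\cap f^{-1}(\mathrm{image}(\gamma))\bigr)$, where $\mu$ is the obvious retraction, satisfies $\pa Q_\gamma=\langle T,f,y\rangle-\langle T,f,y'\rangle\pm(\text{lower order})$ by the homotopy formula \eqref{eq: homotop} and Step~2 above applied stratum by stratum. The uniform volume bounds for the tame family $\{T\cap f^{-1}(B_\ve(y))\}_{y\in\eR_f(T)}$ then control $\mbox{mass}(Q_\gamma)$ by $\mathrm{length}(\gamma)$ times a constant, giving flat-norm continuity.

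The main obstacle is Step~3: proving that at \emph{every} $y\in\eR_f(T)$ (not merely off some further exceptional subanalytic set) the low-rank strata contribute nothing in the limit. The idea is to exploit the fact that the codimension defining $\eR_f(T)$ is exactly the one demanded by Federer's general slicing, and to replace Federer's measure-theoretic argument with a definable-family volume estimate made possible by the o-minimal uniform bounds, which is conceptually cleaner but requires care when tracking the rescaling by $\ve^{n_1}$ against the subanalytic growth of $\mathrm{vol}\bigl(f^{-1}(B_\ve(y))\cap S_i\bigr)$.
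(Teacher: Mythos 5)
You should first note that the paper itself offers no proof of this statement: Appendix \ref{s: c} explicitly collects Hardt's results \emph{without proofs}, and Theorem \ref{th: slice} is quoted from \cite[Theorem 4.3]{Hardt}. Judged on its own merits, your argument has one false step and one genuine gap. The false step is the volume estimate in Step 3. For a stratum $S_i$ on which $f$ has rank $r_i<n_1$, the quantity $\mbox{mass}\bigl(S_i\cap f^{-1}(B_\ve(y))\bigr)/\ve^{n_1}$ need \emph{not} tend to $0$: take $E_0=\bR^3$, $E_1=\bR^2$, $f$ the projection onto the first two coordinates, $S_i=\{(t,t^2,s):\,0<s<t<1\}$ and $y=(0,0)$; then $\cl(S_i)\cap f^{-1}(y)$ is a single point (so the codimension hypothesis is consistent with this configuration), yet $\mbox{mass}\bigl(S_i\cap f^{-1}(B_\ve(y))\bigr)\sim \ve^2/2=\ve^{n_1}/2$, so the quotient tends to a nonzero constant. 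The uniform volume bounds of Appendix \ref{s: b} control fiber volumes, not this rescaled mass. What saves the conclusion is a fact you never invoke: the integrand is $(f^*dV_{E_1})\wedge\alpha$, and $f^*dV_{E_1}$ restricts to zero on any stratum where $df$ has rank $<n_1$, so the low-rank strata contribute \emph{exactly} zero for every $\ve$; no estimate is needed.

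The genuine gap is in Step 2, and it is precisely the content of the theorem. Your argument establishes convergence only for $y$ lying in the open image $f(S_i)$ of a full-rank stratum, where definable local triviality makes $y'\mapsto\int_{\Phi_{y'}}\alpha$ continuous near $y$. But $\eR_f(T)$ contains points $y$ on the frontier $\cl(f(S_i))\setminus f(S_i)$ and on the lower-dimensional cells of the induced decomposition of $E_1$ --- these are exactly the ``non-regular values'' the theorem is about, as the paper emphasizes just before the statement. At such $y$ the nearby fibers $\Phi_{y'}$ can degenerate as $y'\to y$ (their masses may blow up, their point counts may differ from cell to cell, their supports collapse onto $\cl(S_i)\cap f^{-1}(y)$), and continuity of $y'\mapsto\int_{\Phi_{y'}}\alpha$ on each open cell tells you nothing about the limit of the averages over $B_\ve(y)$; Lebesgue differentiation gives convergence only for almost every $y$, whereas the theorem asserts it for \emph{every} $y\in\eR_f(T)$. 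Upgrading ``a.e.'' to ``everywhere on $\eR_f(T)$'' is where Hardt's actual work lies, and it is absent here. Step 4 inherits the same defect: the identity $\pa Q_\gamma=\lan T,f,y\ran-\lan T,f,y'\ran\pm(\cdots)$ presupposes the slice--boundary commutation and mass bounds you are in the course of establishing, and the existence of the arc $\gamma$ inside $\eR_f(T)$ is itself unjustified.
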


\section{Elementary Morse theory on  singular spaces}
\label{s: a}
\setcounter{equation}{0}

Throughout this appendix we fix an $o$-minimal category of sets and we will refer to the sets and maps  in this category as \emph{tame} or \emph{definable}. For a  topological space $Z$ we denote by $H^\bullet(Z)$  the (\v{C}ech) cohomology with real coefficients, and we define its topological Euler characteristic to be the integer
\[
\chit(Z)=\sum_{k\geq 0} \dim H^k(Z),
\]
whenever the sum in the right-hand side is well defined.

Suppose $X$ is a  locally closed  tame subset of $\bsV$, and $S$ is a closed tame subset of $X$. We define the local cohomology of $X$ along $S$ (with real coefficients) to be
\[
H^\bullet_S(X):=H^\bullet(X,X\setminus S).
\]
We can now define the \emph{local cohomology  sheaves} $\h^\bullet_S=\h^\bullet_{X/S}$ to be the sheaves associated to the presheaves $U\longmapsto H^\bullet_{S\cap U}(U)$.

If $x\in X$ and $U_n(x)$ denotes the  open ball of radius $1/n$ centered at $x$ then  for every $m\leq n$ we have morphisms $H^\bullet_{S\cap U_m}(U_m)\ra H^\bullet_{S\cap U_n}(U_n)$, and then the stalk of $\h^p_S$ at $x$ is the inductive limit $\h^\bullet_S(x):=\lim_{n\ra \infty} H^\bullet_{S\cap U_n}(U_n)$. Observe that since $X$ is locally  conical we have
\begin{equation}
\h^\bullet_S(x)=0\mbox{ for every}\;\; x\in (X\setminus S).
\label{eq: vanish}
\end{equation}
We set
\[
\chi_S(X):=\sum_k(-1)^k \dim H^k_S(X),\;\;\chi_S(x):=\sum_{k\geq 0}(-1)^k\dim \h^k_S(x).
\]
We have a Grothendieck  spectral sequence  converging to $H^\bullet_S(X)$ whose $E_2$ term is $E_2^{p,q}= H^p(X, \h^q_S)$.

 If  it happens that the local cohomology sheaves are supported by finite sets then
\[
H^{p,q}(X,\h^q_S)=0,\;\;\forall p>0,
\]
so that the spectral sequence degenerates  at the $E_2$-terms.  In this case we have
\begin{equation}
H^q_S(X)\cong H^0(X,\h^q_S)\cong \bigoplus_{x\in X}\h^q_S(x).
\label{eq: local-coho}
\end{equation}
In particular
\begin{equation}
\chi_S(X)=\sum_{x\in X} \chi_S(x).
\label{eq: local-chi}
\end{equation}

Suppose now that $X$ is a compact  connected   tame subset of $\bsV$ and $f: X\ra \bR$ is a  definable, continuous function.    We will write
\[
X_{f\geq c}:= \{x\in X;\;\;f(x)\geq c\},\;\;   X_{f\leq c}:= \{x\in X;\;\;f(x)\leq c\},\;\;\mbox{etc.}
\]
 A real number  $c$ is said to be a  \emph{regular} value of $f$ if there  exists $\ve >0$ such that the induced map
\[
f :  \{|f-c|<\ve\}\subset X\ra (c-\ve, c+\ve)
\]
is a locally, definably trivial fibration.   A real number $c$ is said to be a \emph{critical value} of $f$ if  it is not  a critical value.    The local trivialization theorem in \cite[Chap. 9]{Dr} implies that the  set of critical values of $f$ is finite.

 Fix a real number $c$,  and consider   the    sheaves of local cohomology     $\h^\bullet_{f\geq c}$ associated to the closed  subset set $S=X_{f\geq c}\subset X$.  Note that   if $f(x) =c$, and $c$ is a regular value,  then $\h^\bullet_{f\geq c}(x)=0$.    Any number  outside  the range  of $f$  is automatically a regular value.

 A point $x\in X$ is called a \emph{homological} critical point    of  $f$ if
 \[
 \h^\bullet_{f\geq c}(x)\neq 0,\;\;\mbox{where}\;\;c=f(x).
 \]
 Note that in this case $c$ must be a  critical value of $f$. We denote by $\Cr_f$ the  set  of homological critical points of $f$.  We say that $f$ is \emph{nice} if     $\Cr_f$ is finite.

   We  define the \emph{index} of $f$ at  a point $x\in \bsV$ to be the integer
 \[
 \bm(f,x)=\bm_X(f,x):=\chi(\h^\bullet_{f\geq c}(x)) =\lim_{r\searrow 0} \chi\Bigl(\, H^\bullet\bigl(\, B_r(x)\cap X, B_r(x)\cap X_{f<c}\,\bigr)\,\Bigr),
 \]
 where $c=f(x)$ and  $B_r(x)$ denotes the \emph{open} ball in $\bsV$ of radius $r$, centered at $x$. Note that $\bm(f,x)=0$ if $x\not\in X$. Due to the local conical structure of $X$ we have
 \begin{equation}
 \bm(f,x)= 1- \lim_{r\searrow 0} \chi\bigl(\, H^\bullet(\,B_r(x)\cap X_{f<c}\,)\,\bigr).
 \label{eq: local-morse}
 \end{equation}
 A point $x\in X$ is called a \emph{numerically critical point} of $f$ if $\bm(f,x)\neq 0$. We denote by $\Cr_f^\#$ the set of  numerically critical points of $f$. Observe that $\Cr_f^\#\subset \Cr_f$.

One can ask the following natural question. Given a compact  tame set $X$, do there exist  nice continuous tame functions $f: X\ra \bR$? The answer is, yes, plenty of them.   More precisely one can show (see \cite{GM, Pig}) that there exists  a subset $\Delta_X\subset \Sigma\dual$, such that $\dim\Delta_X<\dim\Sigma\dual=(n-1)$ and for any $\xi\in\Sigma\dual\setminus \Delta_X$, the induced function $\xi:X\ra \bR$ has only a finite number of homological critical points.

 \begin{ex} (a) Note that if $X$ is a  compact  $C^2$-submanifold of $\bsV$,  $f$  is a Morse function on $X$, and $x$ is a critical point  of $f$ with Morse index $\lambda$, then $\bm(f,x)=(-1)^\lambda$. In this case $\Cr_f=\Cr_f^\#$.

(b) If $X$  is a compact, convex,  subanalytic subset of $\bsV$ and $\xi:\bsV\ra \bR$ is a linear map, then a point$x\in X$ is critical for the restriction of $\xi$ to $X$ if and only if $x$ is a minimum point for $\xi$. In  this case we have $\bm(\xi,x)=1$.\qed
 \end{ex}

\begin{lemma}[Kashiwara]   Suppose  that $X$  is a  compact  connected  tame subset of $\bsV$ and  $f: X\ra \bR$ is a  nice  continuous tame  function that contains no  homological critical points on the level set $\{f=c\}$. Then the  inclusion induced morphism $H^\bullet(X_{f< c+\ve} )\ra H^\bullet(X_{f<c})$ is an isomorphism for all $\ve>0$ sufficiently small.
\label{lemma: kashi}
\end{lemma}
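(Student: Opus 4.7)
The plan is to prove the lemma by establishing $H^k(X_{f<c+\epsilon}, X_{f<c}) = 0$ for all $k$ and all sufficiently small $\epsilon>0$, and then conclude using the long exact sequence of the pair. I split the argument in two stages: first, reduce $X_{f<c+\epsilon}$ to $X_{f\le c}$ using definable local triviality of $f$ above the level $c$; second, collapse $X_{f\le c}$ to $X_{f<c}$ via a local-to-global spectral sequence fed by the non-characteristic hypothesis.

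For the first stage, since $f$ is nice the set $f(\Cr_f)$ is finite, and by the local triviality theorem for tame continuous maps (Appendix \ref{s: b}) the set of values over which $f$ fails to be a definably locally trivial fibration is also finite. Choose $\epsilon>0$ so that $(c,c+\epsilon]$ avoids both sets. Then the restriction $f: f^{-1}((c,c+\epsilon]) \cap X \to (c,c+\epsilon]$ is a definably trivial fibration (the base is a definable interval), and pushing radially along the second factor yields a definable strong deformation retraction of $X_{f<c+\epsilon}$ onto $X_{f\le c}$. The inclusion $X_{f<c}\hookrightarrow X_{f<c+\epsilon}$ factors as $X_{f<c}\hookrightarrow X_{f\le c}\hookrightarrow X_{f<c+\epsilon}$, the rightmost map being a homotopy equivalence. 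It therefore suffices to prove $H^\bullet(X_{f\le c}, X_{f<c}) = 0$.

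For the second stage, I compute this relative cohomology via the Grothendieck spectral sequence
\[
E_2^{p,q} = H^p\bigl(X_{f\le c},\, \mathcal{K}^q\bigr) \Rightarrow H^{p+q}(X_{f\le c}, X_{f<c}),
\]
where $\mathcal{K}^q$ is the sheaf on $X_{f\le c}$ associated to the presheaf $U \mapsto H^q(U, U\cap X_{f<c})$. The stalks $\mathcal{K}^q_x$ vanish automatically when $f(x)<c$. When $f(x)=c$, pick $r$ small enough that the local triviality above $c$ applies near $x$; the resulting fibration structure produces a definable strong deformation retraction of $B_r(x)\cap X$ onto $B_r(x)\cap X_{f\le c}$, and hence
\[
\mathcal{K}^q_x = \lim_{r\searrow 0} H^q\bigl(B_r(x)\cap X_{f\le c},\; B_r(x)\cap X_{f<c}\bigr) \cong \lim_{r\searrow 0} H^q\bigl(B_r(x)\cap X,\; B_r(x)\cap X_{f<c}\bigr) = \h^q_{f\ge c}(x),
\]
which vanishes by hypothesis. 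Therefore every stalk of $\mathcal{K}^q$ vanishes, the spectral sequence gives $H^\bullet(X_{f\le c}, X_{f<c}) = 0$, and the lemma follows.

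The main technical obstacle is the final stalk identification: replacing the hypothesized vanishing of $\h^q_{f\ge c}(x)$ (a pair in $X$) by the vanishing on the one-sided pair $(B_r\cap X_{f\le c}, B_r\cap X_{f<c})$ (a pair in $X_{f\le c}$). This rests on the local retraction of $B_r(x)\cap X$ onto $B_r(x)\cap X_{f\le c}$, which is not tautological but genuinely uses niceness of $f$: one must ensure that no critical phenomena accumulate from the $f>c$ side as $r\searrow 0$, and this is where finiteness of $\Cr_f$ is needed to secure a uniform trivialization just above $c$.
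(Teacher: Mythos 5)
Your proof has the same two-stage architecture as the paper's: first compare $H^\bullet(X_{f<c+\ve})$ with $H^\bullet(X_{f\le c})$, then compare $H^\bullet(X_{f\le c})$ with $H^\bullet(X_{f<c})$ via a local-to-global spectral sequence whose stalks vanish by hypothesis. Stage two is essentially the paper's argument, and your attention to the stalk identification (local cohomology of $X_{f\le c}$ versus local cohomology of $X$) is a point the paper itself treats tersely. Where you part ways with the paper is stage one: the paper does \emph{not} build a deformation retraction. It observes that the fibration over $(c,c+\ve_0)$ makes the inclusion-induced maps $H^\bullet(X_{f<c+\ve''})\to H^\bullet(X_{f<c+\ve'})$ isomorphisms for $0<\ve'<\ve''<\ve_0$, and then invokes the continuity (tautness) axiom of \v{C}ech cohomology, $H^\bullet(X_{f\le c})=\limi_{t\searrow 0}H^\bullet(X_{f<c+t})$, to conclude. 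This is a genuinely more economical route.

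The retraction you build is where the gap is. A definably trivial fibration $f^{-1}((c,c+\ve])\cong F\times(c,c+\ve]$ lets you slide along the second factor only \emph{within} $(c,c+\ve]$; ``pushing radially'' never reaches $\{f=c\}$. To retract $X_{f<c+\ve}$ onto $X_{f\le c}$ you would need the trivialization to extend continuously over $[c,c+\ve]$, identifying $F\times\{c\}$ with (part of) the level set $\{f=c\}$. But $c$ can be a critical value in the local-triviality sense even when $\{f=c\}$ carries no \emph{homological} critical points, so such an extension is not available in general; the local trivialization theorem triangulates $\bR$ and gives you a fibration only over the \emph{open} cell $(c,c+\ve_0)$, with $c$ as a vertex. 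This is precisely the awkwardness that the tautness argument is designed to sidestep: it needs only that the interior maps be isomorphisms and never requires a map into $\{f=c\}$. The same difficulty recurs in your stalk identification for stage two, where you again invoke a local retraction of $B_r(x)\cap X$ onto $B_r(x)\cap X_{f\le c}$ that you have not constructed; a more defensible fix there is to run the Mayer--Vietoris or the inductive-limit argument of (\ref{eq: inj}) locally, rather than to posit a retraction.
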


\begin{proof}    We first prove that  the morphism $H^\bullet(X_{f\leq  c} )\ra H^\bullet(X_{f<c})$. is an isomorphism.   Indeed, it suffices to show  that   the local cohomology of $Z=X_{f\leq c}$ along $S=\{f\geq c\}$ is trivial.     This follows from (\ref{eq: local-coho}) by observing that the local cohomology sheaves  $\h^\bullet_{Z/S}(x)$ are trivial.

Next observe that for some $\ve_0>0$   the induced map  $f:\{c<f<c+\ve_0\}\ra (c,c+\ve_0)$
is a locally trivial fibration. This implies that for any $\ve'<\ve'' <\ve_0$ the induced morphism
\[
H^\bullet(X_{f< c+\ve''} )\ra H^\bullet(X_{f< c+\ve''} )
\]
is an isomorphism.  We conclude by observing that $H^\bullet(X_{f\leq  c} )=\limi_{t\searrow 0} H^\bullet(X_{f<c+t})$. \end{proof}

\begin{remark}  Kashiwara's lemma is valid   in a much more general context, \cite[Prop. 2.7.2]{KaSch}.   The proof  in the general case  is much more involved.\qed
\end{remark}

 Suppose  is a compact  connected   tame set subset of $\bsV$ and  $f: X\ra \bR$ is a  nice,  continuous  tame  function.  Fix $c\in f(X)$. We have  $H^\bullet_{X_{f\geq c}}(X_{f< c+\ve})=H^\bullet(X_{f< c+\ve}, X_{f<c})$. From the equality
\[
\bigcap_{\ve>0} X_{f< c+\ve}= X_{f\leq c}
\]
we deduce
\begin{equation}
\limi_\ve H^\bullet_{X_{f\geq c}}(X_{f< c+\ve})=H^\bullet_{X_{f\geq c}}(X_{f\leq c})= H^\bullet_{X_{f= c}}(X_{f\leq c})= H^\bullet(X_{f\leq c}, X_{f<c}).
\label{eq: inj}
\end{equation}
Using (\ref{eq: vanish}) and   (\ref{eq: local-chi})  we deduce  that for any  sufficiently small $\ve>0$  we have
\begin{equation}
\sum_{\substack{x\in \Cr^\#_f,\\f(x)=c}} \bm(f,x)=\chit\bigl(\,X_{f\leq c}\,\bigr)-\chit\bigl(\,X_{f<c}\bigr)= \chit\bigl(\,X_{f< c+\ve}\,\bigr)-\chit\bigl(\,X_{f<c}\bigr).
\label{eq:   top-drop}
\end{equation}
 Suppose   now that  $c',c\in f(X)$, $c'<c$ and the interval $(c,'c)$ contains  no  critical values of $f$.   Then,
 \begin{equation}
 \chit(X_{f<c'})=\chit(X_{f<c}).
 \label{eq: =}
 \end{equation}
 Iterating (\ref{eq: top-drop}) and (\ref{eq: =})  we deduce  that for any $c,c'\in f(X)$,   $c'<c$, we have
 \begin{equation}
  \chit( X_{f\leq c})-\chit(X_{f \leq c'})= \sum_{\substack{x\in \Cr_f^\#\\ c'<  f(x)  \leq  c}} \bm(f,x).
  \label{eq: vary}
  \end{equation}


\end{document}